\title{Equivariant Cosheaves and Finite Group Representations in Graphic Statics}
\author{Zoe Cooperband\thanks{Department of Electrical and Systems Engineering (ESE), GRASP Lab, and Polyhedral Structures Lab, University of Pennsylvania},$\,$ Miguel Lopez\thanks{Department of Applied Mathematics and Computational Science (AMCS), University of Pennsylvania} $\,$and  Bernd Schulze\thanks{Department of Mathematics and Statistics, Lancaster University}}
\date{}
\begin{document}

\maketitle

\begin{abstract}
This work extends the theory of reciprocal
diagrams in graphic statics to frameworks that are invariant under finite group actions by utilizing the homology and representation theory of cellular cosheaves, recent tools from applied algebraic topology. By introducing the structure of an equivariant cellular cosheaf, we prove that pairs of self-stresses and reciprocal diagrams of symmetric frameworks are classified by the irreducible representations of the underlying group. We further derive the symmetry-aligned Euler characteristics of a finite dimensional equivariant chain complex, which for the force cosheaf yields a new formulation of the symmetry-adapted Maxwell counting rule for detecting symmetric self-stresses and kinematic degrees of freedom in frameworks. A freely available program is used to implement the relevant cosheaf homologies and illustrate the theory with examples.
\end{abstract}

\section{Introduction}

{\it Graphic statics} is a geometric toolbox for analysing the relationship between the form of a bar-joint framework and its internal force loading. The theory dates back to classical work of Maxwell and Cremona from the 19th century \cite{maxwell1864xlv,maxwell1870reciprocal,cremona} and has been widely used for designing and modelling various types of real-world structures. Building on earlier work by Rankine \cite{ran} and others, they discovered that from a self-stress (or equilibrium stress) in a planar framework one can construct a reciprocal diagram (or "force diagram", as it is usually called by engineers), which realises the dual of the original graph as a framework with bar lengths  determined by the stress coefficients. See Figure~\ref{fig:triangular_recip} for an example. They also found that a self-stress over a planar framework induces a vertical polyhedral lifting of the framework into $3$-space known as a ``discrete Airy stress function polyhedron'', with stress coefficients encoded by changes in slope.

In 1982 Whiteley established the converse of these results by showing that every reciprocal diagram and every polyhedral lifting corresponds to a self-stressed framework in the plane \cite{WW82}. See also Crapo and Whiteley \cite{CW93}. Since then, the Maxwell-Cremona correspondence has been applied to the solution of numerous problems in  geometric rigidity theory, polyhedral combinatorics and discrete and computational geometry. See e.g. \cite{Con03,EadesGarvan,hopcroft1992paradigm,streinu,streinuerr}. The theory is also closely linked  to the duality between Voronoi diagrams and Delaunay tessellations (see e.g. \cite{WAC}).

Increased interest in the design of material-efficient structures in engineering, such as gridshell roofs or cable net structures, has heralded a surge of interest in this theory. The visual nature of force-form duality allows an integrated analysis early into the design process, crucial for finding optimal designs. Modern computational tools play a key role here, as they allow quick visualisations of these relationships. Techniques from graphic statics have recently also found new applications in control theory \cite{nguyen2017,nguyen2017convex} and materials science \cite{hab}.


Various generalisations of the Maxwell-Cremona correspondence have recently been established. For example, while a 3D version of the reciprocal diagram was already described by Rankine and Maxwell, the availability of modern computational tools has recently motivated the engineering community to investigate this further \cite{marina,dacunto}. Other recent results include extensions to non-planar frameworks \cite{ErLin,karp}, periodic frameworks \cite{BorStr} and higher-dimensional polytopal complexes \cite{Rybnikov99,Rybnikov991,karp}.

Recently, there has been an explosion of results regarding the rigidity, flexibility and stressability of \emph{symmetric} frameworks. See e.g. \cite{connelly_guest_2022,KTHandbook,SWbook} for an introduction to the theory and a summary of results. Much of this work has been motivated by problems from the applied sciences and industry, where symmetry is utilized by both man-made and natural structures for stability, construction, and aesthetics. For gridshell roofs, 
increasing the dimensionality of the space of self-stresses can reduce the volume of material needed to construct the load-bearing  structure \cite{schulzeetal}. 
By associating the self-stresses of different symmetry types to irreducible representations of the symmetry group of the structure\footnote{Self-stresses with both symmetric and anti-symmetric sign patterns for the stress coefficients play an important role in the design of gridshells. While fully-symmetric self-stresses  relate to  symmetric vertical loadings of the gridshell  (such as self-weight), anti-symmetric  self-stresses  relate to anti-symmetric loadings, such as uneven gravity loads arising from  drifted snow, for example.}, new tools for designing and analysing gridshell structures were recently obtained in \cite{schulzeetal,msmb}. Using a similar approach, refined relations between self-stresses and motions of different symmetries in form and force diagrams were established in \cite{SCHULZE2023112492}.

\begin{figure}
    \centering\includegraphics{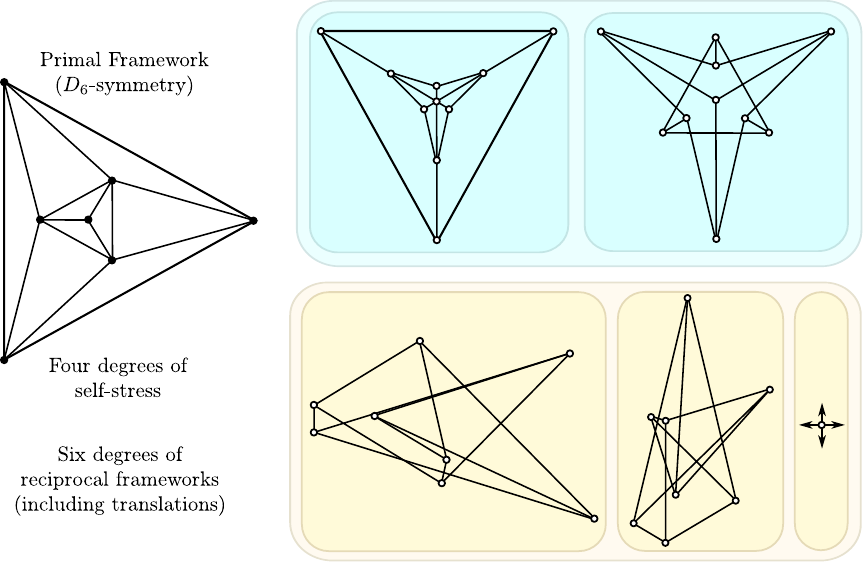}
    \caption{A planar framework with $D_6$-symmetry is pictured (left). There are six dimensions of reciprocal diagrams (right), including two trivial translation dimensions. Each reciprocal diagram is an assignment of geometric coordinates to the dual graph, constrained such that dual edges must be parallel to their primal counterparts. Dual nodes may be drawn overlapping, as is exemplified in the trivial reciprocal diagram (bottom right). The space of reciprocal diagrams is subdivided by symmetry (corresponding to irreducible representations of $D_6$).}
    \label{fig:triangular_recip}
\end{figure}

Recently, a homological description of 2D graphic statics was given in \cite{cooperband2023homological} affirming algebraic topology as a useful theory for structural engineering. This work built upon observations in \cite{cooperband2023homological} that the self-stresses of a framework can be encoded with a tool called a cellular cosheaf. Developed and popularized by the theses of Shepard \cite{shepard1986cellular} and Curry \cite{curry2014sheaves}, cellular sheaves and cosheaves have rapidly found wide application in network coding \cite{curry2014sheaves, ghrist2011network}, optimization \cite{hansen2019distributed, moy2020path}, space networks \cite{short2022current, short2021towards}, opinion dynamics \cite{hansen2021opinion}, and more. Recently, there have been numerous exciting applications in machine learning \cite{hansen2020sheaf, bodnar2022neural, barbero2022sheaf, battiloro2023tangent} where the sheaf Laplacian \cite{hansenSpectralTheoryCellular2019a, hansen2020laplacians} fittingly encodes network data diffusion. This work incidentally describes data equivariance over networks and cell-complexes, bridging the gap towards advances in {\it equivariant} and {\it convolutional} neural networks \cite{cohen2016group, bronstein2021geometric}.

Cosheaves embody the {\it finite element} approach, namely breaking a physical system into smaller units which relate to other units by constraint equations. Finite elements, when framed in terms of cosheaf stalks and extension maps, have access to the wide array of formal methods available in homological algebra \cite{Weibel1994}. The concept of moving from local to global equilibria is paralleled with moving from local to global sections in sheaves, bundles, and other algebraic constructions. This paper is an early description of such methods for {\it homological engineering}, or designing chain complexes and homology spaces to model the constraints and degrees of freedom of complex physical systems.

Cellular cosheaves are a valuable tool in the rapidly expanding field of topological data analysis \cite{curry2014sheaves, yoon2018cellular}. Persistent homology, the core method to analyze and distill topological information from data, has been applied to pursuit and evasion games \cite{de2007coverage, adams2015evasion}, robot path planning \cite{bhattacharya2015persistent}, material science \cite{obayashi2022persistent, onodera2019understanding}, neuroscience \cite{sizemore2019importance, kanari2018topological, bardin2019topological}, and protein folding \cite{xia2014persistent, kovacev2016using}. This work extends the reach of computational homology towards group symmetries and structural engineering.

\subsection{Contributions}
In the present article, we utilize representation theory and computational homology to gain deeper insights into 2D symmetric graphic statics. We show that group actions on frameworks give rise to group actions on cosheaves whose homology encodes structural information. The {\it force and position cosheaves}, encoding the space of self-stresses and reciprocal coordinates respectively, both pass to the group action, demonstrated here with cyclic and dihedral groups. We develop the {\it Euler characteristic} of irreducible cosheaf characters to reformulate Maxwell's counting rule for symmetric frameworks. We then prove that the Maxwell-Cremona equivalence between self-stresses of a framework and its  reciprocal diagrams not only occurs over symmetric frameworks, but that this equivalence also respects the underlying irreducible representations.

\begin{theorem}[\ref{thm:G_GS}) (Symmetric Planar Graphic Statics]
    For every symmetric planar $G$-framework in $\R^2$ and for every irreducible  representation $\mu^{(j)}$ of $G$, there is an isomorphism between $\mu^{(j)}$-symmetric self-stresses and $\mu^{(j)}$-symmetric reciprocal diagrams up to $\mu^{(j)}$-translation symmetry.
\end{theorem}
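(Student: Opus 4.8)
The plan is to promote the (non-symmetric) Maxwell--Cremona isomorphism to an isomorphism of $G$-representations and then read off the claim on each isotypic component. First I would recall the ungraded statement: for a planar framework the Maxwell--Cremona correspondence realizes the space of self-stresses as isomorphic to the space of reciprocal diagrams modulo translations, and in the cosheaf language this isomorphism is induced by a comparison (chain) map between the force cosheaf and the position cosheaf. Throughout I treat both of these vector spaces as the relevant homology groups of the two cosheaves, so that the correspondence becomes a statement about an induced map on cosheaf homology.

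The key structural step is to observe that the $G$-action on the framework endows both the force cosheaf and the position cosheaf with the structure of an equivariant cellular cosheaf, and that the comparison map realizing Maxwell--Cremona is a morphism of equivariant cosheaves. Once this is in place, the induced map on homology is automatically a morphism of $G$-representations, hence $G$-equivariant. This is where I expect the main work to lie: one must verify that the geometric passage from a self-stress to its reciprocal diagram is natural with respect to the symmetry, i.e.\ that applying a group element $g$ to the framework and then forming the reciprocal diagram agrees, up to the induced action, with forming the reciprocal diagram and then applying $g$. Concretely this amounts to checking that the extension maps of the two cosheaves intertwine the $G$-action compatibly, which should follow from the fact that both cosheaves are built functorially from the same symmetric cell structure.

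With the $G$-equivariant isomorphism in hand, the remainder is representation theory. An isomorphism of $G$-representations preserves the canonical isotypic decomposition, so it restricts to an isomorphism between the $\mu^{(j)}$-isotypic component of the self-stress space and the $\mu^{(j)}$-isotypic component of the reciprocal-diagram space, for every irreducible $\mu^{(j)}$; these isotypic components are precisely the $\mu^{(j)}$-symmetric self-stresses and $\mu^{(j)}$-symmetric reciprocal diagrams.

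Finally I would handle the translation ambiguity equivariantly. The space of translations is a $G$-subrepresentation of the reciprocal space, since $G$ acts on translations through the linear parts of its isometries, so it too decomposes into isotypic pieces and quotienting by it commutes with taking the isotypic decomposition. Restricting the equivariant Maxwell--Cremona isomorphism to the $\mu^{(j)}$-component and then passing to the quotient by the $\mu^{(j)}$-part of the translation subspace yields the desired isomorphism between $\mu^{(j)}$-symmetric self-stresses and $\mu^{(j)}$-symmetric reciprocal diagrams up to $\mu^{(j)}$-translation symmetry. The only delicate point here is confirming that the translation subspace is a genuine $G$-subrepresentation of the reciprocal space, so that the quotient inherits a well-defined $G$-action and isotypic grading.
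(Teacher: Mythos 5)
Your high-level strategy---equivariantize the classical Maxwell--Cremona correspondence and then restrict to isotypic components---matches the paper's, and your representation-theoretic endgame (equivariant isomorphisms preserve isotypic decompositions; the translations form a $G$-subrepresentation compatible with that decomposition) is sound. But there is a genuine gap in the structural step on which everything else rests: you assert that the Maxwell--Cremona isomorphism is induced by a comparison \emph{chain map} between the force cosheaf $\calf$ and the position cosheaf $\calp$. No such map can exist. Self-stresses live in $H_1\calf$ while reciprocal diagrams live in $H_2\calp$, and any cosheaf morphism induces a degree-preserving map on homology, so it cannot realize an isomorphism that shifts homological degree by one; moreover $\calf$ has zero stalks on faces and $\calp$ has zero stalks on vertices, so there is no nontrivial cosheaf map in either direction. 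In the paper the isomorphism is the \emph{connecting homomorphism} $\vartheta\colon H_2\calp \to H_1\calf$ of the long exact sequence attached to the short exact sequence $0 \to \calf \xrightarrow{\phi} \overline{\R^2} \xrightarrow{\pi} \calp \to 0$. Your proposal never introduces the constant cosheaf $\overline{\R^2}$ or this sequence, so it has no mechanism for producing the isomorphism, and consequently no way to argue its equivariance: the correct statement is that $\phi$ and $\pi$ are $G$-cosheaf maps (Lemma~\ref{lem:ZmDm_position}), whence naturality of the long exact sequence (diagram~\eqref{eq:Gles}) makes $\vartheta$ a $G$-homomorphism, and Lemma~\ref{lem:split_les} splits the sequence by irreducibles. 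Exactness of $0 \to (\R^2)^{(j)} \to H_2^{(j)}\calp \to H_1^{(j)}\calf \to 0$ then simultaneously delivers the isomorphism and identifies the $\mu^{(j)}$-translations as the kernel, with no separate quotienting argument required.

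A second, related weakness is the claim that the intertwining of the $G$-actions ``should follow from the fact that both cosheaves are built functorially from the same symmetric cell structure.'' This understates what is actually the delicate heart of the paper. Group elements permute cells and can reverse orientations, so cosheaf representations commute with boundary maps only up to the signs $[gc,c]$, and the induced action on the quotient cosheaf $\calp$ carries a sign twist: ${\eta/\rho}_f(g) = -\tau(g)$ when $g$ is a reflection (Equation~\eqref{eq:position_rep_face}). Establishing this requires the explicit commutator computation with rotation matrices in Lemma~\ref{lem:ZmDm_position}; it does not follow formally from functoriality. Your final assertion that ``$G$ acts on translations through the linear parts of its isometries'' inherits this problem: the action on the reciprocal side is the twist of $\tau$ by the orientation character, which is why a vertically mirror-symmetric framework has a horizontally mirror-symmetric reciprocal (Remark~\ref{rem:sign_flip}); that this twisted representation happens to be isomorphic to $\tau$ for cyclic and dihedral groups is a computation, not something one may assume.
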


As a consequence of Theorem~\ref{thm:G_GS}, self-stress and reciprocal diagram pairs can be aligned and organized by their underlying symmetry. Moreover, this theory can be used to decompose reciprocal diagrams of $G$-symmetric self-stressed  frameworks into diagrams of basic symmetry types corresponding to irreducible representations of $G$. See  Figure~\ref{fig:klien_recip} for an example.

Equivariant cellular cosheaves, as introduced here, are closely related to equivariant coefficient systems over simplicial complexes as developed in \cite{boltje1994identities}. Equivariant sheaves have found use in equivariant homotopy theory \cite{EquivShvAndFunctors}, and equivariant chain complexes and homology is an active sub-field of algebraic topology \cite{MayEquivariant}, but to the authors knowledge, this is the first description and application of this particular structure.

\begin{figure}[htp]
    \centering
     \begin{subfigure}[b]{0.3\textwidth}
         \centering
         \includegraphics[width=\textwidth]{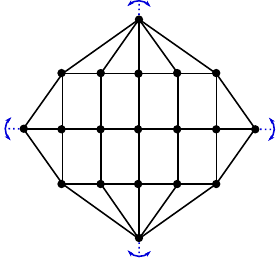}
         \caption{A primal planar graph with $D_4\iso \Z_2 \times \Z_2$ symmetry.}
     \end{subfigure}
     \hfill
     \begin{subfigure}[b]{0.65\textwidth}
         \centering
         \includegraphics[width=\textwidth]{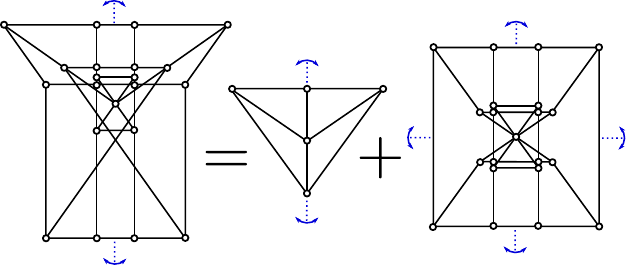}
         \caption{Reciprocal diagrams decompose by symmetry. Dual nodes may coincide and edges may cross in the dual graph.}
     \end{subfigure}
    \caption{A planar framework (geometric graph) with $D_4$ symmetry is pictured (a). This framework may be considered as a form diagram of a small gridshell roof, as it has no edge crossings and exhibits some other key desirable features, such as quad-dominance, aligned vertices and dihedral symmetry. Every reciprocal framework can be subdivided into a linear combination of component frameworks exhibiting symmetries of different irreducible representations of $D_4$, in the sense that adding the point coordinates of the diagrams yields the original reciprocal diagram (b). 
    }
    \label{fig:klien_recip}
\end{figure}

The paper is organised as follows: In Sections~\ref{sec:statics} and \ref{sec:reps} we  provide the necessary background on graphic statics, cellular cosheaves and group representation theory.  In Section \ref{equivariantcosheaf} we introduce the notion of an \emph{equivariant} cellular cosheaf, combining the theories of the previous two sections. Section~\ref{sec:irrid_homology} focuses on the irreducible components of the homology relations, and in particular develops the graphic statics relations of irreducibles. Finally, in Section~\ref{sec:future} we discuss some avenues for future work.

The figures in this paper were developed using Python code that is freely available at the following link:
\hyperlink{https://github.com/zcooperband/EquivariantGraphicStatics}{https://github.com/zcooperband/EquivariantGraphicStatics}. In the linked project, the relevant cosheaf homologies are implemented using matrix methods, and equivariant irreducible self-stress reciprocal diagram pairs are extracted. The project contains several prepared sample frameworks and tests for quick verification of commutativity of diagrams of the form~\eqref{eq:chain_cube}.

\section{Statics, Graphic Statics, and Cosheaves} \label{sec:statics}

While in structural engineering it is common to merge the abstract/combinatorial and geometric characteristics of a pin-jointed truss model, it is useful to distinguish between the two. Discounting geometric singularities, many algebraic properties of a truss are invariant under changes in geometry. The underlying combinatorial structure is that of a cell complex.

A (finite) \emph{cell complex} is a topological space $X$ partitioned into a finite number of cells $\{c\}$, where each cell $c$ is homeomorphic to a topological disk of some dimension\footnote{
Moreover, cells must have ``nice intersections"; for a complete definition of a regular (CW) complex see \cite{curry2014sheaves}.  We note that systems of polyhedra form regular cell complexes.
}. We say cells $c$ and $d$ are  {\it incident} and write $c\lhd d$ if $c$ is a lower dimensional cell on the boundary of the closure of $d$.

A {\it signed incidence relation} on a cell complex $X$ is a pairing $[-:-]: X \times X \to \{0, \pm1\}$ satisfying the following properties:
\begin{itemize}
    \item (Adjacency) $[c:d]\neq 0$ if and only if $c\lhd d$ and $\dim c + 1 = \dim d$.
    \item (Directed Edges) $[u:e][v:e] = -1$ for an edge with incident vertices $u,v\lhd e$.
    \item (Regularity) For any $b\lhd d$, $\sum_{c} [b : c][c : d] = 0$.
\end{itemize}
A signed incidence relation encodes the consistency
of local orientations over cells. When the orientation of two incident cells $c\lhd d$  agree, we set $[c:d]=+1$; otherwise we set $[c:d] = -1$. These properties are essential and will be used widely later.

A {\it framework} $(X,p)$ in the plane $\R^2$ a cell complex $X$ together with a {\it realization} map $p:V(X) \to \R^2$ that assigns each vertex $v$ a geometric position $p_v = p(v)\in \R^2$. We typically require that $p$ be locally injective, meaning that for every edge with endpoints $u,v \lhd e$, we have $p_u\neq p_v$. A framework $(X,p)$ is {\it planar} if its edges, embedded in $\R^2$ as straight lines, do not intersect anywhere except at their endpoints. For a planar framework, the faces of the cell complex $X$ naturally correspond to the connected components of the complement of the union of these geometric edges.

Every (not-necessarily planar) framework models a {\it pin-jointed truss} in the plane. This is a geometric model of a physical truss, where the nodes allow rotations in any direction of the space and the truss members are loaded in pure axial tension or compression. A {\it stress} over a framework $(X,p)$ is an assignment of a real-valued scalar $w_e$ to each edge $e$ encoding the internal tension or compression force over that edge. A {\it self-stress} (or {\it equilibrium stress}) $w$ on a framework $(X,p)$ is a stress that satisfies the following equation\footnote{
Here the value $w_e$ corresponds to a compression force scaled by the length of the edge $\|p_v - p_u\|$.
} at every vertex $v$
\begin{equation}\label{eq:equilibrium_stress}
    \sum\limits_{\{v\lhd e \rhd u : v\neq u\}} w_e (p_v - p_u) = 0,
\end{equation}
the sum being over the vertices and edges in the one-hop neighborhood of $v$.

Equilibrium stresses encapsulate the condition that the truss is in force {\it equilibrium} and are of vital importance to structural engineering statics. In the next section, we show that equilibrium stresses are an instance of a much more general phenomenon.

\subsection{Cellular Cosheaves}

It is typical in a variety of engineering applications to assign vector valued data to the cells of a cell complex. In mechanical systems, this data can consist of forces, kinematic motions, positions, and other geometric-algebraic data. Cosheaves are mathematically precise formulations of these distributed data structures. 

\begin{definition}[Cellular Cosheaf]\label{def:cosheaf}
    Fixing a field $k$,  a ($k$-valued cellular) {\it cosheaf} $\calk$ consists of the following data. Each cell $c \in X$ is assigned a finite dimensional vector space $\calk_c \iso k^n$  called the {\it stalk at} $c$. When cells $c \lhd d$ are incident, a linear {\it extension map} is assigned between stalks $\calk_{d \rhd c}: \calk_d \to \calk_c$. Cosheaves are functors, meaning that $\calk_{c\rhd b} \circ \calk_{d\rhd c} = \calk_{d\rhd b}$ and $\calk_{c\rhd c} = \id$ for incident cells $b\lhd c\lhd d$.
\end{definition}
We will fix the field of all cellular cosheaves to be $\R$ or $\C$.

One thinks of a cosheaf as a blueprint for local algebraic data, describing what data is attached to which cell and how these relate. However, to detect global algebraic structure, we use the blueprint and build a computational machine known as a chain complex.

\begin{definition}[Chain complex]
    The space of $i$-{\it chains} of a cosheaf $\calk$ over a cell complex $X$ is the direct sum of stalks
    \[ C_i\calk = \bigoplus_{\dim c = i} \calk_c.\]
    The {\it boundary} of an $i$-chain $x$ is the $i-1$ dimensional chain $\bdd x$ with component
    \[(\bdd x)_c = \sum_{c\lhd d} [c:d] \calk_{d\rhd c} x_d\]
    at the $i-1$ dimensional cell $c$. Note that we define $\bdd x = 0$ if $x$ is a 0-chain. From the regularity property of signed incidence relations, it follows that $\bdd \circ \bdd = 0$ and hence
\begin{equation}\label{eq:chain_complex}
    \dots C_{i+1} \calk \xrightarrow{\bdd_{i+1}} C_i \calk \xrightarrow{\bdd_i} C_{i-1} \calk \to \dots
\end{equation}
is a {\it chain complex} denoted $C\calk$.
\end{definition}

Certain practical necessities, such as the orientations of cells, are necessary for computations but do not matter in the grand scheme of things. This is motivation for decoupling the core abstraction of the cosheaf from its computational aspects of its chain complex. The core use of cosheaf chain complexes is to compute {\it homology}.

We say an $i$-chain $x\in C_i\calk$ is a {\it cycle} if $\bdd_i(x) = 0$. The $i$-th homology of the chain complex~\eqref{eq:chain_complex} is given by
\begin{equation*}
    H_i \calk = \ker \bdd_i / \im \bdd_{i+1}
\end{equation*}
i.e. the space of quotients of cycles by boundaries of higher dimensional chains. The homology of a cosheaf is invariant of choices of (valid) signed incidence relations.

\begin{example}[Constant cosheaves]
    \label{ex:constant}
    Let $V$ be a finite dimensional vector space. The {\it constant cosheaf} $\overline{V}$ over a cell complex $X$ has identical stalks $\overline{V}_c = V$ for all cells $c$ and identity extension maps $\overline{V}_{d\rhd c} = \id$.   
    The homology of $\overline{V}$ is equivalent to the cellular homology of $X$. In particular $H_i\overline{k} = H_i(X;k)$ in $k$-valued coefficients. More generally when $V$ is of dimension $m$ we have $\dim H\overline{V}=m\dim H(X;k)$.
\end{example}

\begin{example}[The force cosheaf]
    \label{ex:force}
    The following force cosheaf encodes the forces within an axially loaded truss as well as equilibrium stresses of the truss. This cosheaf has been defined and developed previously in the context of graphic statics \cite{cooperband2023homological, cooperband2023reciprocal}.
    
    Fix a framework $(X,p)$ in $\R^2$. The force cosheaf $\calf$ over $(X,p)$ has stalks $\calf_e=\R$ encoding the axial force in an edge $e$ and $\calf_v=\R^2$ encoding the space of external forces at each joint. Both extension maps $\calf_{e\rhd u}$ and $\calf_{e\rhd v}$ send $1\in \calf_e$ to the same vector\footnote{
    This assignment dictates that $1\in \calf_e$ corresponds to the edge being in compression. If we were to wish for the basis element $1\in \calf_e$ to correspond to a tension value, we simply set $\calf_{e\lhd v}(1) = \calf_{e\lhd u}(1) = [v:e](p_u-p_v)$.}
    \begin{equation}\label{eq:force_extension}
        [v:e](p_v - p_u) = [u:e](p_u - p_v).
    \end{equation}
    
    The boundary map of the force cosheaf $\bdd:C_1\calf \to C_0 \calf$ can be represented as a size $n|V|\times |E|$ matrix known as the {\it equilibrium matrix}. Note that the equilibrium matrix is the transpose of the classical rigidity matrix from geometric rigidity theory \cite{connelly_guest_2022,WW}. The kernel of $\bdd$ is the first homology $H_1 \calf$, the vector space of equilibrium stresses of the structure. To confirm this, we expand the boundary matrix at a chain $w\in C_1 \calf$
    \begin{equation*}
        (\bdd w)_v
        = \sum\limits_{v\lhd e} [v:e] \calf_{e\rhd v} w_e
        = \sum\limits_{\{ v\lhd e\rhd u : v\neq u \}} [v:e]^2 (p_v - p_u) w_e
    \end{equation*}
    which is zero at all vertices $v$ exactly when $w$ is an equilibrium stress following equation~\eqref{eq:equilibrium_stress}. The cokernel of $\bdd$ is the zeroth homology $H_0 \calf = C_0 \calf / \im \bdd$, interpreted as the space of {\it infinitesimal motions} of the framework. These include the trivial infinitesimal rigid body motions (rotation and translation) as well as the non-trivial infinitesimal motions (mechanisms).
\end{example}
\subsection{Maps Between Cosheaves}

An {\it exact sequence} of vector spaces is a sequence of vector spaces and linear maps
\begin{equation}\label{eq:ses}
    \dots \to V_3 \xrightarrow{f_2} V_2 \xrightarrow{f_1} V_1 \xrightarrow{f_0} V_0 \xrightarrow{f_{-1}} V_{-1} \to \dots
\end{equation}
where for each index the maps satisfy $\im f_i = \ker f_{i-1}$. A {\it short exact sequence} is of form~\eqref{eq:ses} where $V_i = 0$ except at indices $i=0,1,2$. In this case, the map $f_1$ is injective and the map $f_0$ is surjective. Furthermore there are isomorphisms $V_0 \iso V_1/V_2$ and $V_0 \oplus V_2 \iso V_1$

Let $\calk$ and $\call$ be cosheaves over a cell complex $X$. A cosheaf map $\phi: \calk \to \call$ is comprised of component maps between stalks $\phi_c: \calk_c \to \call_c$ such that the following diagram commutes
\begin{equation}\label{eq:Gcosheaf_map}
    \begin{tikzcd}
        \calk_d \ar[r, "\phi_d"] \ar[d, "\calk_{d\rhd c}"] & \call_d \ar[d, "\call_{d\rhd c}"] \\
        \calk_c \ar[r, "\phi_c"] & \call_c
    \end{tikzcd}
\end{equation}
for every pair of incident cells $c\lhd d$. Cosheaf maps induce maps on chain complexes $\phi: C\calk \to C\call$ comprised of the constituent maps.

\begin{definition}
    A short exact sequence of cosheaves
    \begin{equation*}
        0 \to \calk \to \call \to \calm \to 0
    \end{equation*}
    has an induced short exact sequence of cosheaf maps 
    \begin{equation}\label{eq:chain_ses}
        0 \to C\calk \xrightarrow{\phi} C\call \xrightarrow{\psi} C\calm \to 0
    \end{equation}
    such that the induced sequence at each stalk
    \begin{equation*}
        0 \to \calk_c \xrightarrow{\phi_c} \call_c \xrightarrow{\psi_c} \calm_c \to 0
    \end{equation*}
    is exact. All cosheaf stalks we will discuss are finite dimensional and we will assume that the underlying cell complex $X$ only has a finite number of cells, so the sequence~\eqref{eq:chain_ses} is an exact sequence of finite dimensional vector spaces.
\end{definition}

In particular, at each stalk $\calm_c \iso \call_c / \calk_c$ and therefore $C \calk \oplus C\calm \iso C\call$ is an isomorphism of chain complexes.

From any injective cosheaf map $\phi: \calk \to \call$ we can construct the {quotient cosheaf} $\call/\phi\calk$ with stalks $(\call/\phi\calk)_c = \call_c / \im \phi_c \calk_c$. If each stalk $\calk_c$ is considered as a subspace of the stalk $\call_c$ under the embedding $\phi_c$, we may treat $\calk$ as a sub-cosheaf of $\call$ and drop notation declaring the quotient cosheaf to be $\call/\calk$.

A short exact sequence of cosheaf maps induces maps on homology, and moreover we get a  {\it long exact sequence in homology}
\begin{equation}\label{eq:les}
    \dots \to H_{i+1} \call / \calk \xrightarrow{\vartheta} H_i \calk \xrightarrow{\phi} H_i\call \xrightarrow{\psi} H_i \call / \calk \xrightarrow{\vartheta} H_{i-1} \calk \to \dots
\end{equation}
where $\vartheta$ are {\it connecting homomorphisms}.

\begin{example}[Planar graphic statics]
\label{ex:graphic_statics}
    Suppose $(X,p)$ is a planar framework in $\R^2$. Both the force cosheaf $\calf$ introduced in Example~\eqref{ex:force} and the constant cosheaf $\overline{\R^2}$ may be defined over this same framework $(X,p)$. There is a natural injective cosheaf map $\phi$ from $\calf$ to $\overline{\R^2}$ which we now describe.

    The map $\phi$ is the identity linear map over vertices, with $\phi_v: \calf_v \to \overline{\R^2}_v$, a canonical isomorphism equating stalks $\calf_v = \R^2$ and $\overline{\R^2}_v = \R^2$. The map $\phi$ is injective over edges; for an edge $u,v\lhd e$,  $\phi_e: \calf_e \to \overline{\R^2}$ sends $1$ to the normalization of the vector $[u:e](p_u - p_v)$ (this is the same as the extension maps $\calf_{e\rhd u}$ and $\calf_{e\rhd v}$). By construction, $\phi$ is natural, with
    \begin{equation*}
        \id \circ \calf_{e\rhd v} = \phi_v \circ \calf_{e\rhd v} = \overline{\R^2}_{e\rhd v} \circ \phi_e = \id \circ \phi_e
    \end{equation*}
    with a similar equation over the other incidence $u\lhd e$.

    We assign the notation $\calp :=\overline{\R^2} / \phi \calf$ for convenience, and let $\pi:\overline{\R^2} \to \calp$ denote the cosheaf quotient map. We call $\calp$ the {\it position cosheaf} dual to $\calf$ because we will see that $\calp$ encodes the positions of the dual vertices of reciprocal diagrams. Since trusses have trivial data over faces $f$, $\calf_f = 0$ and $\calp_f = \overline{\R^2}_f = \R^2$. Over an edge $u,v\lhd e$, we know that $\phi\calf_e$ is the span of the vector $(p_u - p_v)$ and consequently $\calp_e = \R^2 / \text{span}\{p_u-p_v\}$. Lastly, $\calp_v=0$ over vertices $v$.

    In graphic statics, self-stresses of a truss $(X,p)$ are associated with {\it reciprocal diagrams}, here realizations $\xi$ of the dual graph $\tilde{X}$ such that an edge $e$ is parallel with its dual $\tilde{e}$. See Figures~\ref{fig:triangular_recip} and~\ref{fig:mirror_gs}. Abstractly, the position of a dual node $\tilde{f}$ is a coordinate in $\R^2$ which we encode by the map $\xi:\tilde{F} \to \R^2$. The collection of these positions $\xi\tilde{F}$ must satisfy the constraint that $\xi_{\tilde{f}} - \xi_{\tilde{g}}$ is parallel with the vector $p_u - p_v$ for any edge with vertex and face incidences $u,v\lhd e\lhd f,g$. Equivalently, $\xi_{\tilde{f}} - \xi_{\tilde{g}}$ is an element of $\text{span}\{p_u-p_v\}$ and therefore
    \begin{equation}\label{eq:reciprocal_constraint}
        [\xi_{\tilde{f}} - \xi_{\tilde{g}}] = [0] \in \R^2 / \text{span}\{p_u - p_v\} = \calp_e
    \end{equation}
    is the zero class.

    This is all to say the space of parallel reciprocal diagrams, encoded by realizations $\xi$ over $\tilde{F}\iso F$, are elements of $H_2 \calp$. We see that $\xi$ is a cycle if and only if Equation \eqref{eq:reciprocal_constraint} is satisfied everywhere \cite{cooperband2023homological}.

    From the exact sequence of $\calf$, $\overline{\R^2}$ and the quotient cosheaf $\calp$ we have a segment of the long exact sequence
    \begin{equation}\label{eq:reciprocal_les}
        0 \to H_2 \overline{\R^2} \to H_2 \calp \xrightarrow{\vartheta} H_1 \calf \xrightarrow{\phi} H_1 \overline{\R^2} \to \dots 
    \end{equation}
    We know that since $X$ has spherical topology, $H_2 X \iso \R$ and $H_1 X = 0$. Consequently, the constant homology is determined as $H_2 \overline{\R^2} \iso \R^2$ and $H_1 \overline{\R^2} = 0$ and line~\eqref{eq:reciprocal_les} simplifies considerably. We find there is an isomorphism $\vartheta: H_2 \calp/\R^2 \to H_1 \calf$, meaning that the space of self-stresses of $X$ is isomorphic to the space of parallel reciprocal diagrams of $\tilde{X}$ up to global translation. 
\end{example}

In the above Example~\ref{ex:graphic_statics}, we derived the characteristics of the position cosheaf purely from the force and constant cosheaves. The properties of any quotient cosheaf in general can be derived in a similar manner (by the universal property of quotients). This is critical because the problem of understanding quotient spaces (here reciprocal diagrams) is translated into an equivalent problem of understanding its precursors (here self-stresses and ambient space) which are often much more tractable.

\begin{figure}[htp]
     \centering
     \hfill
     \begin{subfigure}[b]{0.28\textwidth}
         \centering
         \includegraphics[width=\textwidth]{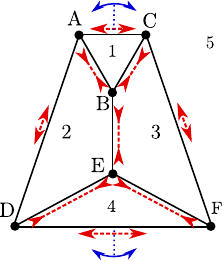}
         \caption{}
     \end{subfigure}
     \hfill
     \begin{subfigure}[b]{0.45\textwidth}
         \centering
         \includegraphics[width=\textwidth]{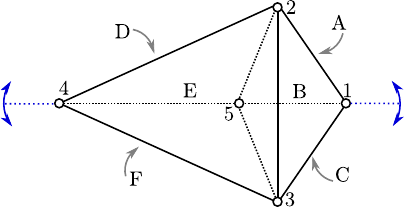}
         \caption{}
     \end{subfigure}
     \hfill
    \caption{A framework in {\it Desargues configuration} with a self-stress and vertical mirror-symmetry (a) and its corresponding parallel reciprocal diagram with horizontal mirror-symmetry (b). The ``transformation'' of the mirror from vertical to horizontal is a consequence of Remark~\ref{rem:sign_flip}.}
    \label{fig:mirror_gs}
\end{figure}

\section{Finite Group Representations}
\label{sec:reps}

A group representation is a homomorphism $\rho: G \to \text{GL}_k(W)$ where $G$ is a group and $W$ is a vector space over some field $k$. In this paper we will require the field $k$ to be $\R$ or $\C$ and the group $G$ to be finite. The dimension of $W$ is the \emph{dimension} of $\rho$.  We say $(W, \rho)$ is a {\it $(k)G$-module} under the group action $g\cdot w := \rho(g)w$ transforming vectors $w\in W$.

\begin{example}[Standard representation of cyclic and dihedral groups]
\label{ex:ZmDm_rep}
    Let $\Z_m$ be the cyclic group on $m$ elements and $D_{2m}$ be the dihedral group of order $2m$. (Note that $\Z_2=D_2$.) These groups act on the plane $\R^2$ by rotations and reflections by a two-dimensional representation $\tau$. Picking a rotation generator $r_1$, $\tau(r_1)\in SO(2)$ is a rotation matrix by angle $2\pi/m$. For a reflection $s$, $\tau(s)$ is a determinant $-1$ matrix with eigenvectors parallel or perpendicular to the line of reflection. 
\end{example}

We say two representations $\rho_0$ and $\rho_1$ are {\it equivalent} if there exists an invertable matrix ${\bf P}$ such that 
\begin{equation}\label{eq:rep_bases}
    \rho_0 = {\bf P}^{-1}\rho_1{\bf P}
\end{equation}
where we regard ${\bf P}$ as a change in basis. In coordinates, if $\scrb_0$ is a set of basis vectors for $V_0$ and $\scrb_1$ is a basis for $V_1$, then there is an invertable matrix ${\bf P}$ such that for each basis vector $b_0\in \scrb_0$ and $b_1\in \scrb_1$, $b_1 = {\bf P}b_0$.

Suppose that $(V, \rho)$ and $(W, \eta)$ are $G$-modules. A {\it $G$-homomorphism} $\phi: (V, \rho) \to (W, \eta)$ is a linear map $\phi:V \to W$ satisfying the natural equality $\eta(g)\circ \phi = \phi \circ \rho(g)$ for every $g\in G$. We say that $(V, \rho)$ and $(W, \eta)$ are isomorphic $G$-modules if there exists a $G$-isomorphism between them.

Suppose $W$ is a subspace of a $G$-module $(V, \rho)$. We say that $W$ is a ($\rho$-invariant) {\it $G$-submodule} if $\rho(g)w\in W$ for every $w\in W$. Then $\rho$ induces a representation $\rho_W: G\to \text{GL}_k(W)$ consisting of the restriction of $\rho$ to the subspace. The resulting embedding $(W, \rho_W) \to (V, \rho)$ is a $G$-homomorphism.

If $(W, \rho_W)$ is a $G$-submodule of $(V, \rho)$ we construct the $G$-quotient space $(V/W, \rho/ \rho_W)$. The group $G$ acts on a quotient vector $x + W$ by
\begin{equation}\label{eq:quotient_rep}
    \rho/\rho_W(g)(x+W) = \rho(g)(x) + \rho(g)W = \rho(g)(x) + W.
\end{equation}
Because the field $k$ is $\R$ or $\C$, by Maschke's Theorem there is a $G$-submodule $(U, \eta)$ such that $(V, \rho)$ is isomorphic to $(W \oplus U, \rho_W \oplus \eta)$ \cite{jl}.

A $G$-module $(V, \rho)$ is {\it irreducible} (and $\rho$ is irreducible) if its only $G$-submodules are zero and $(V, \rho)$ itself. Otherwise we say $V$ (or $\rho)$ is reducible. If $(W_0, \rho_1)$ and $(W_1, \rho_2)$ are $G$-submodules of $V$ such that $V\iso W_0 \oplus W_1$, then the representation $\rho$ is equivalent to the direct sum
\begin{equation*}
    (\rho_0 \oplus \rho_1)(g) =
    \begin{bmatrix}
    \rho_0(g) & 0 \\
    0 & \rho_1(g)
    \end{bmatrix}_{\scrb_0 \cup \scrb_1}
\end{equation*}
in the basis $\scrb_0 \cup \scrb_1$ of $W_0 \oplus W_1$, the ordered union of basis sets $\scrb_0$ for $W_0$ and $\scrb_1$ for $W_1$. Clearly the representations $(V, \rho)$ and $(W_1\oplus W_2, \rho_0\oplus \rho_1)$ are isomorphic.

\begin{example}[Irreducible representations of common cyclic and dihedral groups]\label{ex:irrep_tau}
The group $\Z_2$ has two $1$-dimensional irreducible representations, namely the one that assigns $1$ to both group elements, denoted by $\mu^{(1)}$, and the one that assigns $1$ to the trivial and $-1$ to the non-trivial group element, denoted by $\mu^{(2)}$. It is easy to see that the standard representation $\tau$ over $\Z_2$ from Example~\ref{ex:ZmDm_rep} decomposes as $\tau=\mu^{(1)}\oplus\mu^{(2)}$ in the case of reflection symmetry and $\tau=2\mu^{(2)}$ in the case of half-turn symmetry (see also \cite{schtan}).

Over the complex numbers, the cyclic group $\Z_m=\{0,\ldots, m-1\}$  has $m$ 1-dimensional irreducible representations $\mu^{(1)},\ldots,\mu^{(m)}$, where for
 each $j=1,\ldots, m$ and each $t\in \Z_m$, we have $\mu^{(j)}(t)=\zeta^{t(j-1)}$, with $\zeta=e^{\frac{2\pi i}{m}}$. 
A straightforward calculation shows that for $m\geq 3$, the standard representation $\tau$ of $\Z_m$ from Example~\ref{ex:ZmDm_rep} decomposes as $\tau=\mu^{(2)}\oplus\mu^{(m)}$ (see e.g. \cite{schtan,atk70}).

The dihedral group $D_4=\Z_2\times \Z_2=\{(0,0),(1,0),(0,1),(1,1)\}$ has four 1-dimensional irreducible representations $\mu^{(00)}, \mu^{(01)},  \mu^{(10)}, \mu^{(11)}$, 
which are defined by 
$\mu^{(j_1j_2)}((t_1,t_2))=(-1)^{j_1t_1+j_2t_2}$ for $0\leq j_1,j_2\leq 1$ and $(t_1,t_2)\in D_4$. It is again easy to see that the representation $\tau$ of $D_4$ decomposes as $\tau=\mu^{(10)}\oplus\mu^{(01)}$.

Finally, for all $m\geq 3$, the representation $\tau$ of $D_{2m}$ is an irreducible 2-dimensional representation  over the complex numbers \cite{atk70}.
\end{example}

The tensor product of two $G$-modules $(V_0, \rho_0)$ and $(V_1, \rho_1)$ is the $G$-module $(V_0 \otimes V_1, \rho_0 \otimes \rho_1)$ with the group action
\begin{equation*}
    (\rho_0 \otimes \rho_1) (g) (x_0 \otimes x_1) = (\rho_0(g) x_0) \otimes (\rho_1(g) x_1)
\end{equation*}

\begin{example}[Permutation Representation]
    Suppose $S$ is a finite set and $\alpha: G\times S \to S$ is a permutation action on $S$. Then $\alpha$ extends to a permutation representation on $k^{|S|}$, the vector space with a basis of formal elements of $S$. If $S=G$ and $\alpha$ is the group action acting by composition, then $\alpha$ is the regular representation.
\end{example}

With the field $k$ equal to $\R$ or $\C$ and the group $G$ finite, there are only finitely many irreducible representations of $G$ up to isomorphism \cite{jl}. We label these irreducible $G$-modules as $(k^{n_1}, \mu^{(1)} ), \dots, ( k^{n_m}, \mu^{(m)} )$ where $n_i$ is the dimension of the $i$-th irreducible representation and $m$ is the number of conjugacy classes of $G$. It is customary for the first irreducible $(k^{n_1}, \mu^{(1)})$ to be trivial, meaning $n_1 = 1$ and $\mu^{(1)}(g) = 1$ for every element $g\in G$. Every $G$-module can be uniquely decomposed as a direct sum of irreducible representations up to isomorphism \cite{jl}.

\begin{theorem}[\cite{jl}]\label{thm:Gkernel}
    Let $\phi:(V, \rho) \to (W, \eta)$ be a $G$-homomorphism. The subspaces $\ker\phi\subset V$ and $\im \phi \subset W$ are $G$-submodules under the action of $\rho$ and $\eta$. Likewise, the cokernel $W/\im \phi$ is a $G$-quotient space under $\eta$.
\end{theorem}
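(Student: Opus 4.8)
The plan is to exploit the single defining relation of a $G$-homomorphism---the intertwining identity $\eta(g)\circ \phi = \phi \circ \rho(g)$ for all $g\in G$---and push each of the three subspace claims through it directly. First I would verify that $\ker \phi$ is $\rho$-invariant: for any $v\in \ker\phi$ and any $g\in G$, I compute $\phi(\rho(g)v) = \eta(g)\phi(v) = \eta(g)\cdot 0 = 0$, so $\rho(g)v\in \ker\phi$. Hence $\rho$ restricts to a well-defined representation on $\ker\phi$, exhibiting it as a $G$-submodule of $(V,\rho)$.

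Next I would treat the image by the dual computation. Given $w\in \im\phi$, choose $v\in V$ with $w = \phi(v)$; then $\eta(g)w = \eta(g)\phi(v) = \phi(\rho(g)v)\in \im\phi$, so $\im\phi$ is $\eta$-invariant and is therefore a $G$-submodule of $(W,\eta)$. The cokernel claim then follows formally: since $\im\phi$ is a $G$-submodule of $(W,\eta)$, the general quotient-representation construction of equation~\eqref{eq:quotient_rep} applies verbatim, endowing $W/\im\phi$ with a $G$-quotient structure under which $g\cdot(w + \im\phi) = \eta(g)w + \im\phi$.

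The only point demanding genuine care---and the closest thing here to an obstacle---is the well-definedness of this quotient action, which is precisely why establishing the $\eta$-invariance of $\im\phi$ before addressing the cokernel is essential. Concretely, if $w + \im\phi = w' + \im\phi$, then $w - w'\in \im\phi$, so $\eta(g)(w-w')\in \im\phi$ by invariance, giving $\eta(g)w + \im\phi = \eta(g)w' + \im\phi$; thus the formula does not depend on the chosen coset representative. Everything remaining is routine linearity, so no further difficulty is anticipated.
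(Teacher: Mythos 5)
Your proof is correct and is exactly the standard argument: the paper itself gives no proof of Theorem~\ref{thm:Gkernel}, citing it to \cite{jl}, and your verification (kernel invariance and image invariance via the intertwining identity $\eta(g)\circ\phi=\phi\circ\rho(g)$, then well-definedness of the quotient action as in equation~\eqref{eq:quotient_rep}) is precisely what that reference does. No gaps; the attention to coset-representative independence for the cokernel is the right point to check.
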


This theorem is utilized in the following fundamental lemma.

\begin{lemma}[(Partial) Schur's Lemma \cite{jl}
]\label{lem:schur}
    Suppose $(V, \rho)$ and $(W, \eta)$ are irreducible $G$-modules over the field $\R$ or $\C$. If $\phi:(V, \rho) \to (W, \eta)$ is a $G$-homomorphism then $\phi$ is an isomorphism or the zero map.
\end{lemma}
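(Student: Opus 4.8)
The plan is to combine Theorem~\ref{thm:Gkernel} with the definition of irreducibility through a short case analysis. The key observation is that a $G$-homomorphism between irreducibles cannot have a ``partial'' kernel or image: any such subspace is forced to be a $G$-submodule, and irreducibility leaves only the two trivial possibilities. So the entire proof amounts to pushing $\ker\phi$ and $\im\phi$ through this dichotomy.

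First I would note that by Theorem~\ref{thm:Gkernel}, $\ker\phi$ is a $G$-submodule of $(V,\rho)$. Since $(V,\rho)$ is irreducible, its only submodules are $\{0\}$ and $V$, so either $\ker\phi = V$ or $\ker\phi = \{0\}$. In the first case $\phi$ annihilates every vector, hence $\phi$ is the zero map and we are done.

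In the remaining case $\ker\phi = \{0\}$, so $\phi$ is injective. Here I would turn to the image: again by Theorem~\ref{thm:Gkernel}, $\im\phi$ is a $G$-submodule of the irreducible module $(W,\eta)$, hence $\im\phi = \{0\}$ or $\im\phi = W$. Because irreducible modules are (by convention) nonzero and $\phi$ is injective, a nonzero vector of $V$ maps to a nonzero vector, so $\im\phi \neq \{0\}$, ruling out the first option. Thus $\im\phi = W$ and $\phi$ is surjective; being both injective and surjective, $\phi$ is a $G$-isomorphism.

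The argument is essentially immediate once Theorem~\ref{thm:Gkernel} is in hand, and the only point requiring care is the bookkeeping around the degenerate case. I expect the main (minor) obstacle to be confirming that $\im\phi$ is genuinely nonzero in the injective branch, which rests on the standing convention that irreducible $G$-modules are nonzero so that $V \neq \{0\}$. I would emphasize that no field-specific input beyond what already underlies Theorem~\ref{thm:Gkernel} enters the proof, consistent with the statement holding uniformly over $\R$ and $\C$; the sharper conclusion that a $G$-endomorphism of an irreducible is a scalar (valid only over $\C$) is the part of the classical result deliberately omitted here, which is why the statement is labeled \emph{partial}.
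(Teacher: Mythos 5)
Your proof is correct and follows exactly the route the paper intends: the lemma is stated with a citation to \cite{jl}, and the paper's preceding sentence notes that Theorem~\ref{thm:Gkernel} ``is utilized in the following fundamental lemma,'' which is precisely your kernel/image dichotomy argument. The one delicate point --- ruling out $\im \phi = \{0\}$ in the injective branch via the convention that irreducible modules are nonzero --- is handled correctly, so nothing is missing.
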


For each irreducible representation $\mu^{(j)}$ corresponding to the group $G$, let $( V^{(j)}, \rho^{(j)} )$ denote the $G$-submodule of $(V, \rho)$ isomorphic to the direct sum of all factors of the irreducible $G$-module $(k^{n_j}, \mu^{(j)} )$ in $(V, \rho)$. It follows that
\begin{equation}\label{eq:Gsubrep}
    (V, \rho) \iso (V^{(1)}, \rho^{(1)}) \oplus \dots \oplus (V^{(m)}, \rho^{(m)}).
\end{equation}

Suppose $V\iso V_0\oplus V_1$ and $W\iso W_0 \oplus W_1$ are two $G$-modules and constituent $G$-submodules. Further, suppose $\phi_0 :V_0 \to W_0$ and $\phi_1: V_1 \to W_1$ are two $G$-homomorphisms. We write $\phi_0 \oplus \phi_1: V_0 \oplus V_1 \to W_0 \oplus W_1$ for the combined $G$-homomorphism, represented by a block diagonal matrix
\begin{equation*}
    \phi_0 \oplus \phi_1 = 
    \begin{bmatrix}
        \phi_0 & 0 \\
        0 & \phi_1
    \end{bmatrix}
\end{equation*}
in some basis $\scrb_{V_0}\cup \scrb_{V_1}$ for the domain and a basis $\scrb_{W_0} \cup \scrb_{W_1}$ for the codomain.

We say an ordered basis $\scrb = (b_1, \dots, b_N)$ for a $G$-module $V$ is {\it adapted} if:
\begin{itemize}
    \item $b_\ell \in \scrb$ implies that $b_ \ell \in V^{(j)}$ for some $(j)$.
    \item If $b_\ell,b_{\ell'} \in \scrb$ with $b_\ell\in V^{(j)}$, $b_{\ell'} \in V^{(t)}$ and $j<t$, then $\ell < \ell'$. 
\end{itemize}
Letting $\scrb^{(j)}$ denote a basis for $V^{(j)}$, it follows that an adapted basis for $V$ is an ordered union of bases $\scrb^{(1)}\cup \dots \cup \scrb^{(m)}$. We say that a vector $x\in V$ is {\it $\mu^{(j)}$-symmetric} if $x$ is an element of the subspace $V^{(j)}$, or equivalently $x$ is a linear combination of basis vectors in $\scrb^{(j)}$.

\begin{theorem}[\cite{jl}]\label{thm:Ghomomorphism}
    Suppose $G$ is finite and the field $k$ is $\R$ or $\C$. Every $G$-homomorphism $\phi:(V, \rho) \to (W, \eta)$ decomposes as a direct sum of $G$-homomorphisms $\phi^{(1)} \oplus \dots \oplus \phi^{(m)}$ over irreducibles with
    \begin{equation*}
        \phi^{(j)}: V^{(j)} \to W^{(j)}.
    \end{equation*}
      In particular, the matrix $\phi^{(1)} \oplus \dots \oplus \phi^{(m)}$ is block diagonal with respect to adapted bases of $V$ and $W$.
\end{theorem}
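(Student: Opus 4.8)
The plan is to show that $\phi$ is compatible with the isotypic decompositions \eqref{eq:Gsubrep} of $V$ and $W$, thereby reducing the statement to repeated applications of Schur's Lemma. Write $V \iso V^{(1)} \oplus \dots \oplus V^{(m)}$ and $W \iso W^{(1)} \oplus \dots \oplus W^{(m)}$, and let $\iota_j : V^{(j)} \to V$ and $\pi_t : W \to W^{(t)}$ be the inclusion of the $j$-th summand and the projection onto the $t$-th summand. These are $G$-homomorphisms, being the structure maps of a direct-sum decomposition of $G$-modules. Consequently each block $\phi_{tj} := \pi_t \circ \phi \circ \iota_j : V^{(j)} \to W^{(t)}$ is a $G$-homomorphism, as a composite of $G$-homomorphisms, and $\phi$ is recovered as the ``matrix'' $(\phi_{tj})_{t,j}$ with respect to these decompositions.

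The heart of the argument is to prove that $\phi_{tj} = 0$ whenever $t \neq j$. Since $V^{(j)}$ and $W^{(t)}$ are isotypic but \emph{not} irreducible in general, Lemma~\ref{lem:schur} does not apply to $\phi_{tj}$ directly, so the plan is first to refine the decomposition. By Maschke's Theorem write $V^{(j)} \iso \bigoplus_a U_a$ and $W^{(t)} \iso \bigoplus_b Z_b$, where every $U_a$ is irreducible and isomorphic to $\mu^{(j)}$ and every $Z_b$ is irreducible and isomorphic to $\mu^{(t)}$; this is precisely the defining property of the isotypic components $V^{(j)}$ and $W^{(t)}$. Composing $\phi_{tj}$ with the corresponding inclusions and projections yields $G$-homomorphisms $U_a \to Z_b$ between irreducibles of types $\mu^{(j)}$ and $\mu^{(t)}$. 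Because $j \neq t$, these irreducibles lie in distinct isomorphism classes, so Lemma~\ref{lem:schur} forces each such map to be zero. As all of its refined blocks vanish, $\phi_{tj} = 0$.

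It then follows that $\phi(V^{(j)}) \subseteq W^{(j)}$ for every $j$: indeed $\pi_t(\phi(\iota_j x)) = \phi_{tj}(x) = 0$ for all $t \neq j$, so the image lands entirely in the $j$-th summand. Defining $\phi^{(j)} := \phi_{jj} : V^{(j)} \to W^{(j)}$, we obtain $\phi = \phi^{(1)} \oplus \dots \oplus \phi^{(m)}$. Finally, recall that an adapted basis of $V$ is the ordered union $\scrb^{(1)} \cup \dots \cup \scrb^{(m)}$ with $\scrb^{(j)}$ spanning $V^{(j)}$, and similarly for $W$. Since $\phi$ maps each $V^{(j)}$ into $W^{(j)}$ and annihilates the cross terms, its matrix in these adapted bases is block diagonal, with the $j$-th block the matrix of $\phi^{(j)}$, as claimed.

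I expect the main obstacle to be the middle step: correctly reducing the vanishing of the cross blocks $\phi_{tj}$ to Schur's Lemma. The partial Schur's Lemma stated here applies only to genuinely irreducible modules, whereas the isotypic pieces $V^{(j)}$ and $W^{(t)}$ carry nontrivial multiplicities. The care lies in refining them into irreducible summands, verifying that the induced component maps are themselves $G$-homomorphisms, and using that distinct labels $j \neq t$ index non-isomorphic irreducibles, so that each component is forced to be zero rather than an isomorphism.
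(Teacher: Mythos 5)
Your proof is correct and takes essentially the same approach as the paper: the paper's entire proof is the single line ``This is a direct consequence of Schur's Lemma~\ref{lem:schur}'', and your argument is precisely the careful expansion of that reduction, refining the isotypic components into irreducible summands so that the partial Schur's Lemma forces every cross block $\phi_{tj}$, $t \neq j$, to vanish. The point you flag as the main obstacle --- that Lemma~\ref{lem:schur} applies only to genuinely irreducible modules, not to the isotypic pieces themselves --- is exactly the detail the paper leaves implicit, and you handle it correctly.
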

\begin{proof}
    This is a direct consequence of Schur's Lemma~\ref{lem:schur}.
\end{proof}

\begin{theorem}
\label{thm:Gsequence}
    Fix a sequence of $G$-modules
    \begin{equation}\label{eq:Gsequence}
        \dots \to V_3 \xrightarrow{\phi_3} V_2 \xrightarrow{\phi_2} V_1 \xrightarrow{\phi_1} V_0 \xrightarrow{\phi_0} V_{-1} \to \dots.
    \end{equation}
    where maps $\phi_i$ are $G$-homomorphisms. For any irreducible representation $\mu^{(j)}$ of $G$, there is a well-defined sequence of $G$-submodules
    \begin{equation}\label{eq:split_sequence}
        \dots \to V_3^{(j)} \xrightarrow{\phi_3^{(j)}} V_2^{(j)} \xrightarrow{\phi_2^{(j)}} V_1^{(j)} \xrightarrow{\phi_1^{(j)}} V_0^{(j)} \xrightarrow{\phi_0^{(j)}} V_{-1}^{(j)} \to \dots.
    \end{equation}
    where $V_i \iso V_i^{(1)} \oplus \dots \oplus V_i^{(m)}$ are isomorphic $G$-modules and $\phi_i$ is equivalent to the map $\phi_i^{(1)} \oplus \dots 
    \oplus \phi_i^{(m)}$ in an adapted basis. Moreover, if sequence~\eqref{eq:Gsequence} is exact then sequence~\eqref{eq:split_sequence} is exact for every index $(j)$.
\end{theorem}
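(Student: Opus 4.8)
The plan is to prove the statement in two stages: first produce the decomposed sequence~\eqref{eq:split_sequence}, and then verify that exactness is inherited component-by-component. The whole argument rests on Theorem~\ref{thm:Ghomomorphism} applied to each arrow separately, together with the uniqueness of the isotypic decomposition~\eqref{eq:Gsubrep}. First I would construct~\eqref{eq:split_sequence}. Each $V_i$ carries its canonical decomposition $V_i \iso V_i^{(1)} \oplus \dots \oplus V_i^{(m)}$ from~\eqref{eq:Gsubrep}, where $V_i^{(j)}$ is intrinsically defined as the sum of all irreducible submodules of $V_i$ isomorphic to $\mu^{(j)}$, and so does not depend on any choice of basis. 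Applying Theorem~\ref{thm:Ghomomorphism} to the $G$-homomorphism $\phi_i: V_i \to V_{i-1}$ yields the decomposition $\phi_i^{(1)} \oplus \dots \oplus \phi_i^{(m)}$ with $\phi_i^{(j)}: V_i^{(j)} \to V_{i-1}^{(j)}$; each $\phi_i^{(j)}$ is simply the restriction of $\phi_i$ to $V_i^{(j)}$, whose image lands in $V_{i-1}^{(j)}$ precisely because there are no off-diagonal blocks carrying type $(j)$ into a type $(t \neq j)$. Fixing $j$ and collecting the restrictions $\phi_i^{(j)}$ over all $i$ gives a well-defined sequence, since consecutive maps compose as restrictions of the composable $\phi_i$. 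Choosing adapted bases for every $V_i$ simultaneously realizes each $\phi_i$ as the asserted block-diagonal matrix.

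The core of the proof is the exactness statement. Assume $\im \phi_{i+1} = \ker \phi_i$ as $G$-submodules of $V_i$; I want $\im \phi_{i+1}^{(j)} = \ker \phi_i^{(j)}$ inside $V_i^{(j)}$. The strategy is to observe that passing to the type-$(j)$ isotypic component commutes with both $\ker$ and $\im$, so that the whole theorem reduces to the two identities
\begin{equation*}
    \ker \phi_i^{(j)} = (\ker \phi_i) \cap V_i^{(j)}, \qquad \im \phi_{i+1}^{(j)} = (\im \phi_{i+1}) \cap V_i^{(j)}.
\end{equation*}
Granting these, I intersect the hypothesis $\ker \phi_i = \im \phi_{i+1}$ with $V_i^{(j)}$ and read off $\ker \phi_i^{(j)} = \im \phi_{i+1}^{(j)}$, which is exactly exactness of~\eqref{eq:split_sequence} at $V_i^{(j)}$; running over all $i$ and $j$ then finishes the proof.

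The hard part is establishing these two commutation identities, which is where the block-diagonal structure and the uniqueness of isotypic components do the real work. For the kernel, writing $x = \sum_j x^{(j)}$ with $x^{(j)} \in V_i^{(j)}$, the element $\phi_i(x) = \sum_j \phi_i^{(j)}(x^{(j)})$ lies in the \emph{direct} sum $\bigoplus_j V_{i-1}^{(j)}$, so it vanishes if and only if every summand $\phi_i^{(j)}(x^{(j)})$ vanishes; hence $\ker \phi_i = \bigoplus_j \ker \phi_i^{(j)}$. Similarly $\phi_{i+1}(y) = \sum_j \phi_{i+1}^{(j)}(y^{(j)})$ gives $\im \phi_{i+1} = \bigoplus_j \im \phi_{i+1}^{(j)}$. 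The one point requiring genuine care is that the type-$(j)$ isotypic component of a submodule $W \subseteq V_i$ equals $W \cap V_i^{(j)}$: since $W$ is itself a $G$-submodule (by Theorem~\ref{thm:Gkernel}, applied to $\ker \phi_i$ and to $\im \phi_{i+1}$) it has its own canonical decomposition $W = \bigoplus_j W^{(j)}$ with each $W^{(j)} \subseteq V_i^{(j)}$, and the \emph{uniqueness} of the isotypic decomposition of $V_i$ then forces $W \cap V_i^{(j)} = W^{(j)}$. This is the lemma that lets me pass freely between ``the $(j)$-part of a kernel or image'' and ``the kernel or image of the $(j)$-part,'' and it is the only step where one must invoke uniqueness rather than mere existence of the decomposition.
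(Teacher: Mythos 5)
Your proposal is correct and follows essentially the same route as the paper: apply Theorem~\ref{thm:Ghomomorphism} arrow-by-arrow to get the block maps $\phi_i^{(j)}$, decompose $\ker\phi_i = \bigoplus_j \ker\phi_i^{(j)}$ and $\im\phi_{i+1} = \bigoplus_j \im\phi_{i+1}^{(j)}$, and match components inside $V_i^{(j)}$. The only cosmetic difference is the final extraction step — you intersect the equality of direct sums with $V_i^{(j)}$ (backed by uniqueness of isotypic decompositions), whereas the paper notes that pieces of different types $j \neq t$ intersect trivially; these are the same linear-algebra fact.
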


\begin{proof}
    By Theorem~\ref{thm:Ghomomorphism} each map $\phi_i$ is equivalent to the map $\phi_i^{(1)} \oplus \dots \oplus \phi_i^{(m)}$ in some adapted basis for $V_i$. Specifically the image of $\phi_i^{(j)}$ is contained in $V_{i-1}^{(j)}$, meaning that the composition $\phi_{i-1}^{(j)} \circ \phi_i^{(j)}$ is well defined.

    Suppose that sequence~\eqref{eq:Gsequence} is exact and fix an index $i$. For $j\neq t$ it must be true that $\ker \phi_i^{(j)}\cap \ker \phi_i^{(t)} = 0$. Thus we have equalities
    \begin{equation*}
        \ker \phi_i^{(1)} \oplus \dots \oplus \ker \phi_i^{(m)} = \ker \phi_{i} = \im \phi_{i+1} = \im \phi_{i+1}^{(1)} \oplus \dots \oplus \im \phi_{i+1}^{(m)}
    \end{equation*}
    of the $G$-submodule of $V_i$. It is also true for $j\neq t$ that $\ker \phi_i^{(j)} \cap \im \phi_i^{(t)} = 0$. Therefore we have an equality of $G$-submodules $\ker \phi_i^{(j)} = \im \phi_{i+1}^{(j)}$ for each $(j)$ and the sequence~\eqref{eq:split_sequence} is exact.
\end{proof}

We conclude this brief exposition on group representations with a comment on characters. The {\it character} of a (real or complex) representation $\rho$ is a class function $\chi_\rho : G \to \C$ given by the traces
\begin{equation*}
    \chi_\rho(g) = \text{trace}(\rho(g)).
\end{equation*}
The {\it degree} of a character $\chi_p$ is the dimension of the representation $\rho$. Characters are invariants of equivalent representations and hence are basis independent. It is useful to think of the character as a vector $\chi(\rho) \in \C^{|G|}$ where each coordinate is the trace of the matrix $\rho(g)$ (in some basis). Two group elements $g_0$ and $g_1$ are {\it conjugate} if there exists some $h\in G$ such that $g_0 = h^{-1}g_1h$. The traces of conjugate elements $g_0$ and $g_1$ are equal, and hence any {\it character table} only needs to list a representative from each conjugacy class (noting its multiplicity). Examples of character tables are shown in Figure~\ref{fig:star_recip} and Figure~\ref{fig:flower_recip}.

The following well-known identities then hold for any two representations $\rho_1$ and $\rho_2$:
\begin{align}
\label{eq:char_identities}
    \chi(\rho_0 \oplus \rho_1)(g) &= \chi(\rho_0)(g) + \chi(\rho_1)(g) \\
    \chi(\rho_0 \otimes\rho_1)(g) &= \chi(\rho_0)(g) \cdot \chi(\rho_1)(g).
\end{align}
Moreover, if $\rho_0$ and $\rho_1$ are two representations of a finite group $G$ then $\rho_0$ is equivalent to $\rho_1$ if and only if $\chi_0 = \chi_1$ \cite{jl}.

It is well known that the characters of the irreducible representations of a finite group $G$ form a basis for the {\it dual group} $\hat{G}$ of class functions $G\to \C$. \cite{jl}. This allows characters to be decomposed by the {\it character inner product}
\begin{equation}
    \langle \chi_0, \chi_1 \rangle = \frac{1}{|G|} \sum_{g\in G} \chi_0(g), \overline{\chi_1(g)}
\end{equation}
where $\overline{\chi_1(g)}$ is the complex conjugate of $\chi_1(g)$.

\begin{theorem}[Character Orthogonality \cite{jl}]\label{thm:char_orth}
    Over any finite group $G$ and any two irreducible representations $\mu^{(j)}$ and $\mu^{(t)}$, the following holds:
    \begin{equation}
        \langle \chi(\mu^{(j)}), \chi(\mu^{(t)}) \rangle = 
        \begin{cases}
            1 & \text{if} \; j = t \\
            0 & \text{if} \; j\neq t
        \end{cases}
    \end{equation}
    \end{theorem}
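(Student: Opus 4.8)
The plan is to deduce the statement from Schur's Lemma (Lemma~\ref{lem:schur}) via the standard averaging construction, and then to pass from orthogonality of matrix coefficients to orthogonality of characters by summing over diagonal entries. Throughout I regard $\mu^{(j)}$ as afforded by an irreducible module $(V,\rho)$ and $\mu^{(t)}$ by $(W,\eta)$, and I work over $\C$ (real representations may be complexified, which does not change the complex-valued characters). The first ingredient I would record is the identity $\overline{\chi(\mu^{(t)})(g)} = \chi(\mu^{(t)})(g^{-1})$: since $g$ has finite order, $\eta(g)$ is diagonalizable with roots-of-unity eigenvalues, so the complex conjugate of its trace equals the trace of its inverse. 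This lets me rewrite the inner product as
\[
    \langle \chi(\mu^{(j)}), \chi(\mu^{(t)})\rangle = \frac{1}{|G|}\sum_{g\in G}\chi(\mu^{(j)})(g)\,\chi(\mu^{(t)})(g^{-1}).
\]

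Next I would introduce the averaging operator. For an arbitrary linear map $\phi: V\to W$, set $\bar\phi = \frac{1}{|G|}\sum_{g\in G}\eta(g)^{-1}\phi\,\rho(g)$. A short index shift (replacing $g$ by $gh$) shows $\bar\phi\,\rho(h) = \eta(h)\,\bar\phi$ for every $h\in G$, so $\bar\phi$ is a $G$-homomorphism $(V,\rho)\to(W,\eta)$. Now Schur's Lemma applies: when $j\neq t$ the modules are non-isomorphic irreducibles, so $\bar\phi = 0$ for every choice of $\phi$; when $j=t$ the map $\bar\phi$ is a self-intertwiner of an irreducible, hence a scalar $\lambda\,\id$, and taking traces (using $\text{trace}(\bar\phi)=\text{trace}(\phi)$) pins down $\lambda = \text{trace}(\phi)/\dim V$.

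To convert this into usable equations I would feed in the elementary matrices $\phi = E_{ab}$, which extract the Schur orthogonality relations for matrix coefficients: the quantity $\frac{1}{|G|}\sum_g \eta(g^{-1})_{ia}\,\rho(g)_{bj}$ vanishes when $j\neq t$, and equals $\delta_{ab}\delta_{ij}/\dim V$ when $j=t$. Finally I would specialize to diagonal entries by setting $a=i$ and $j=b$ and then summing over $b$ and $i$. Since $\chi(\mu^{(j)})(g)=\sum_b\rho(g)_{bb}$ and $\chi(\mu^{(t)})(g^{-1})=\sum_i\eta(g^{-1})_{ii}$, this collapses the double sum to $0$ in the case $j\neq t$ and to $\sum_{b,i}\delta_{ib}/\dim V = 1$ in the case $j=t$, which is exactly the claim.

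The main obstacle I anticipate is the $j=t$ case, where I need the full force of Schur's Lemma — that a $G$-endomorphism of a complex irreducible is a scalar — rather than the ``isomorphism-or-zero'' version recorded as Lemma~\ref{lem:schur}. I would bridge this gap using algebraic closedness of $\C$: any eigenvalue $\lambda$ of $\bar\phi$ yields a $G$-endomorphism $\bar\phi - \lambda\,\id$ with nonzero kernel, which by Lemma~\ref{lem:schur} must be the zero map, forcing $\bar\phi = \lambda\,\id$. This is precisely the point where passing to $\C$ (rather than arguing directly over $\R$) is essential, since over $\R$ the endomorphism algebra of an irreducible need not consist only of scalars.
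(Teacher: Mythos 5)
Your proof is correct and is the standard averaging/Schur argument; the paper itself gives no proof of this theorem, citing \cite{jl}, where essentially this same argument (matrix-coefficient orthogonality via the averaged intertwiner, then summing diagonal entries) appears. Your bridging step for the $j=t$ case --- upgrading the paper's ``isomorphism-or-zero'' Lemma~\ref{lem:schur} to the scalar form of Schur's Lemma via an eigenvalue of $\bar\phi$ over $\C$ --- correctly addresses the only subtlety in deducing the result from what the paper records.
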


The characters $\chi(\mu^{(j)})$ from Theorem~\ref{thm:char_orth} are {\it irreducible characters} and can be looked up in standard references on representation theory \cite{alt94,atk70}. 

\begin{theorem}[Character Decomposition \cite{jl}]
\label{thm:char_decomp}
    Let $G$ be a finite group and $\chi$ be a character of $G$. Then $\chi$ can be uniquely decomposed into a linear combination of irreducible characters
    \begin{equation}
    \label{eq:char_decomp}
        \chi = \langle \chi, \chi(\mu^{(1)}) \rangle \chi(\mu^{(1)}) + \dots + \langle \chi, \chi(\mu^{(m)}) \rangle \chi(\mu^{(m)}).
    \end{equation}
\end{theorem}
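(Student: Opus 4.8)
The plan is to leverage the unique decomposition of a representation into irreducibles together with the orthogonality of irreducible characters. Since $\chi$ is by definition the character of some representation $(V,\rho)$, I would first invoke the fact (recalled in the paragraph preceding Theorem~\ref{thm:Gkernel}, and made explicit in~\eqref{eq:Gsubrep}) that every $G$-module decomposes uniquely, up to isomorphism, as a direct sum of irreducibles. Writing this decomposition so that $V^{(j)}$ gathers the $a_j$ copies of the $j$-th irreducible $\mu^{(j)}$, with $a_j\in\Z_{\geq 0}$ its multiplicity, the additivity of characters under direct sums (the first identity of~\eqref{eq:char_identities}) immediately yields $\chi=\sum_{j=1}^m a_j\,\chi(\mu^{(j)})$. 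This already establishes that $\chi$ is a linear combination of irreducible characters; the remaining work is to identify the coefficients $a_j$ and to argue uniqueness.

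To pin down the coefficients, I would pair both sides of $\chi=\sum_j a_j\,\chi(\mu^{(j)})$ with a fixed irreducible character $\chi(\mu^{(t)})$ under the character inner product. Using linearity in the first argument and then Character Orthogonality (Theorem~\ref{thm:char_orth}), every term with $j\neq t$ vanishes and the single surviving term gives $\langle\chi,\chi(\mu^{(t)})\rangle=\sum_j a_j\langle\chi(\mu^{(j)}),\chi(\mu^{(t)})\rangle=a_t$. Substituting $a_t=\langle\chi,\chi(\mu^{(t)})\rangle$ back into the sum reproduces exactly the claimed formula~\eqref{eq:char_decomp}.

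For uniqueness, I would appeal to the fact (stated just before the character inner product is introduced) that the irreducible characters form a basis for the space $\hat{G}$ of class functions $G\to\C$; in particular they are linearly independent, which is itself a direct consequence of the orthonormality in Theorem~\ref{thm:char_orth}. Hence the expression of the class function $\chi$ in the basis $\{\chi(\mu^{(j)})\}$ is unique, and its coefficients are forced to coincide with the inner products computed above.

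I do not expect a serious obstacle, since the statement follows formally from tools already assembled: the points requiring care are (i) ensuring that $\chi$ genuinely arises from a representation, so that the module decomposition into irreducibles applies, and (ii) remembering that the character inner product is conjugate-linear in its second slot, so the pairing must be taken with $\chi(\mu^{(t)})$ in the second argument, or equivalently one notes that the multiplicities $a_j$ are real nonnegative integers and are therefore unaffected by conjugation. Everything else is bookkeeping with the two character identities and orthogonality.
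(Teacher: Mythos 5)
Your proof is correct, and there is nothing in the paper to compare it against: Theorem~\ref{thm:char_decomp} is stated as a known result cited to \cite{jl}, with no proof given in the text. Your argument—decompose $(V,\rho)$ into irreducibles via Maschke's theorem, apply the additivity identity~\eqref{eq:char_identities} to get $\chi=\sum_j a_j\,\chi(\mu^{(j)})$, extract $a_t=\langle\chi,\chi(\mu^{(t)})\rangle$ via Theorem~\ref{thm:char_orth}, and obtain uniqueness from the linear independence of the irreducible characters—is precisely the standard textbook proof underlying that citation, including the correct handling of conjugate-linearity in the second slot of the inner product.
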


These facts make character theory a essential tool for computing and decomposing into $G$-submodules.

\section{Equivariant Cosheaves}
\label{equivariantcosheaf}

The general theories of cellular cosheaves and finite group representations were reviewed independently in the previous two sections. In combination, we define equivariant cosheaves for describing symmetric data assignments. We then focus on applications: symmetric force loading assignments and symmetric reciprocal frameworks.

\begin{definition}[$G$-cell complex]
    For  a finite group $G$, a {\it $G$-cell complex} $(X, \alpha)$ is a cell complex $X$ with a permutation action $\alpha: G \times X \to X$ on the set of cells of $X$ satisfying:
    \begin{itemize}
        \item (Functorial) For any cell $c$, any $g,h\in G$, and $\epsilon\in G$ the identity element, $\alpha(g,\alpha(h,c)) = \alpha(gh,c)$ and $\alpha(\epsilon, c) = c$.
        \item (Equivariant) If $c\lhd d$ then $\alpha(g, c) \lhd \alpha(g, d)$.
    \end{itemize}
\end{definition}

\begin{definition}[$G$-cosheaf]\label{def:Gcosheaf}
    Suppose that $G$ is a finite group, $(X, \alpha)$ is a $G$-cell complex and $\calk$ is a $k$-cosheaf over $X$ such that $\calk_{c} \iso \calk_{gc}$ for every $g\in G$ and cell $c$. A {\it cosheaf representation} $\rho$ is a family of group representations on each space of chains $\{\rho_i: G \to \text{GL}_k(C_i \calk)\}$ such that:
    \begin{itemize}
        \item[(i)] For every $g\in G$, $\rho_{i-1}(g)\circ \bdd_i = \bdd_i \circ \rho_i(g)$.
        \item[(ii)] For $x$ an $i$-chain, the value of $\rho_i(g)(x)$ at a cell $gc$ depends only on $x_c$. In other words, there are isomorphisms for each cell $\rho_c:G \to \text{Orbit} (\calk_c)$ such that $(\rho_i(g)(x))_{gc} = \rho_c(g)x_c$.
    \end{itemize}
    We say the pair $(\calk, \rho)$ is a {\it $(k)G$-cosheaf}. 
\end{definition}

There are numerous observations we can make about $G$-cosheaves. From point (i), for each $g$, $\rho(g)$ is a $G$-chain complex\footnote{A $G$-chain complex is a functor from $G$ as a single object category to the category of chain complexes.} automorphism from $C \calk$ to itself. This means each chain space $C_i \calk$ is a $G$-module and the boundary maps $\bdd_i$ are $G$-homomorphisms. Utilizing point (ii) of Definition~\ref{def:Gcosheaf} and looking at the component of the boundary map corresponding to the incidence $c\lhd d$, we find that point (i) is equivalent to the equation
\begin{equation}\label{eq:Gchain_complex}
    \begin{aligned}
        (\rho_{i-1}(g)\circ \bdd_i(x_d))_{gc}
        & = [c:d]\rho_c(g)\calk_{d\rhd c} x_d \\
        &= [gc:gd]\calk_{gd\rhd gc} \rho_d(g) x_d \\
        & = (\bdd_i \circ \rho_i(g)(x_d))_{gc}.
    \end{aligned}
\end{equation}
being satisfied everywhere. Consequently, the ``local'' components of the cosheaf representation $\rho$, $\{\rho_c\}$ satisfy the commutative diagram
\begin{equation}\label{eq:Gcosheaf_failure}
    \begin{tikzcd}
        \calk_{d} \ar[r, "\rho_d(g)"] \ar[d, "\calk_{d\rhd c}"'] & \calk_{gd} \ar[d, "\calk_{gd \rhd gc}"] \\
        \calk_{c} \ar[r, "\rho_c(g)"] & \calk_{gc}
    \end{tikzcd}
\end{equation}
 up to sign, for every $g\in G$ and every incidence $c\lhd d$. Recalling diagram~\eqref{eq:Gcosheaf_map}, the above diagram~\eqref{eq:Gcosheaf_failure} is exactly the condition that $\rho(g)$ is a $G$-cosheaf map. Therefore, every $G$-cosheaf representation $\rho$ is nearly a $G$-natural cosheaf automorphism\footnote{We mean a functor from $G$ as a single object category to the category of natural transformations of cosheaves to themselves.}, and is so up to sign (this is a point worthy of future investigation).

\begin{example}[Trivial constant $G$-cosheaves]
\label{ex:trivial_Gconstant}
    Here we illustrate why a cosheaf representation may not be a $G$-indexed family of cosheaf maps. In particular, permuting the underlying cell interferes with preservation of orientation and signs, even on the simplest cosheaves.
    
    Suppose we have a $G$-cell complex $(X, \alpha)$ and $\overline{\R^n}$ is a constant cosheaf. The {\it trivial cosheaf representation} $\iota$ over $\overline{\R^n}$ is comprised of local maps $\iota_c(g) = [gc,c] \cdot \id$ for every $g\in G$ and cell $c$, where $[gc,c]\in \pm{1}$ measures orientation alignment between $c$ and $gc$. We let $[gc,c]=+1$ if the orientation of $c$ is reflected by the $g$ action, or $[gc,c]=-1$ if the orientation of $c$ is reversed. To satisfy equation~\eqref{eq:Gchain_complex} we require
    \begin{equation}\label{eq:sign_commute}
        [gc,c][c:d] = [gc:gd][gd, d]
    \end{equation}
    to hold true for all $c\lhd d$ and $g$. Requiring vertices to always have positive sign, $[gv,v]=+1$ for all $v$, an edge $u,v\lhd e$ changes sign and has $[ge,e]=-1$ when $[v:e][gv:ge]=-1$ (or equivalently $[u:e][gu:ge]=-1$). This sign change is demonstrated in Figure~\ref{fig:trivial_1}.
    
    With this trivial cosheaf action $\iota$, structure follows from the underlying permutation action $\alpha$. Over the unit constant cosheaf $\overline{\R}$, each map $\iota_i(g): C_i \overline{\R} \to C_i \overline{\R}$ is a signed permutation matrix with cells for basis elements. Abusing notation and declaring $\alpha_i$ to be a representation consisting of permutation matrices on $i$-cells in $C_i X = C_i \overline{\R}$, it follows that $\iota_0 = \alpha_0$ and $\iota_i$ is equivalent to $\alpha_i$ up to sign for $i>0$.
    
    Over $\overline{\R^n}$ with the trivial cosheaf action, each map $\iota_i(g):C_i \overline{\R^n} \to C_i \overline{\R^n}$ can be represented as a matrix with $\pm {\bf I}_n$ signed identity blocks. This is equivalent to the representation ${\bf I}_n\otimes \alpha_i$ on $C_i \overline{\R^n}$ up to sign, where ${\bf I}_n(g) = {\bf I}_n$ is the trivial representation on $\R^n$.
\end{example}

\begin{figure}
    \centering
    \includegraphics[scale = 1.4] {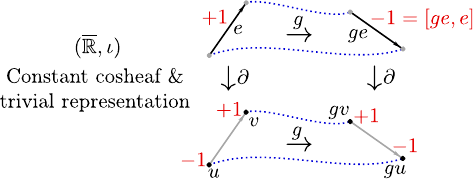}
    \caption{A sketch of the trivial representation $\iota$ over the constant cosheaf $\overline{\R}$, satisfying the constraint equation~\eqref{eq:Gchain_complex}. The edge $e$ changes orientation under the group action $g$, meaning $[ge,e] = -1$. The trivial representation $\iota$ does not detect cell geometry or embedding, only orientations.}
    \label{fig:trivial_1}
\end{figure}

\begin{example}[Regular representation]
    The group $G$ can be considered as a discrete cell complex comprised of a point for each group element. Let $(G, \ell)$ be a $G$-cell complex where $\ell$ is the left action of $G$ on itself. Suppose $(\overline{\R}, \iota)$ is the unit constant cosheaf over $(G,\ell)$ with the trivial $G$-action from the above example. Then the space of 0 chains $C_0(\overline{\R}, \iota)\iso \R^{|G|}$ is generated by a basis of group elements and $\iota = \iota_0 = \alpha_0$ is the left regular representation.
\end{example}

\begin{example}[Standard cyclic and dihedral constant cosheaf]
\label{ex:ZmDm_constant}
    When $G$ is a cyclic or dihedral group, there is a more useful representation than the trivial representation over constant cosheaves. We let $\eta$ be a cosheaf representation over $\overline{\R^2}$ determined by local maps $\eta_c(g) = [gc,c]\tau(g)$ where $\tau$ is the representation on $\R^2$ introduced in Example~\ref{ex:ZmDm_rep}. The representation $\eta_i(g)$ is equal to the representation $\tau \otimes \alpha_i$ when $i=0$ and is equivalent up to sign when $i>0$.
\end{example}

\begin{figure}
    \centering
    \includegraphics[scale = 1.4] {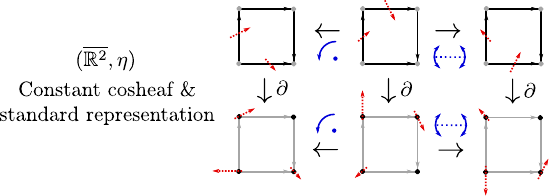}
    \caption{The $D_8$-constant cosheaf $(\overline{\R^2}, \eta_1)$ is pictured over a square cell complex, with 1 and 0 dimensional data drawn. Here we examine the commutativity condition (i) of Definition~\ref{def:Gcosheaf} over edges and vertices. To the left the group element is a $\pi/2$ rotation counter-clockwise, and to the right the group element is a reflection about the horizontal axis. Take note of the sign alignment $[ge,e]=\pm 1$ between an edge $e$ and its permutation.}
    \label{fig:eta_1}
\end{figure}

A realization $p:V \to \R^2$ is a {\it $G$-realization} if there is a representation $\tau_0: G \to \text{GL}(\R^2)$ over which $p$ is equivariant. In other words, $p$ satisfies
\begin{equation}\label{eq:Grealization}
    \tau_0(g) p_v = p_{gv}
\end{equation}
for every $g\in G$ and vertex $v\in V$. We will take $\tau_0 = \tau$ to be the standard representation from Example~\ref{ex:ZmDm_rep}. A {\it $G$-framework} is a $G$-cell complex $(X, \alpha)$ together with a $G$-realization forming a triple $(X, \alpha, p)$.

\begin{example}[Cyclic and dihedral force cosheaf]
\label{ex:ZmDm_force}

    We investigate the force cosheaf $\calf$ over such a realized $G$-cell complex $(X, \alpha, p)$. Due to the isomorphism of vertex stalks $\calf_v \iso \overline{\R^2}$, we can consider the representation $\rho$ on $\calf$ extending the representation $\tau \otimes \alpha_0$ on $C_0 \calf \iso C_0 \overline{\R^2}$ to 1-chains. We set $\rho_e(g) = \id$ between edge stalks $\calf_e \to \calf_{ge}$ sending a unit compression over $e$ to unit compression over $ge$ for every edge $e$ and element $g\in G$. With this identification $\rho_1$ is equivalent to $\alpha_1$, the representation of (strictly positive) permutation matrices on edge generators in $C_1\calf \iso C_1 X$. To confirm that $\rho$ is indeed a $G$-representation we check the condition~\eqref{eq:Gchain_complex}, namely the the following vectors are equal
    \begin{equation*}
        \begin{aligned}
            \relax [gv:ge]\calf_{ge\rhd gv} \circ \id(1)
            & = [gv:ge]^2(p_{gv} - p_{gu})
            \\
            & = [v:e]^2\tau(g)(p_{v} - p_{u})
            \\
            & = [v:e]\tau(g) \circ \calf_{e\rhd v}(1).
        \end{aligned}
    \end{equation*}
    Here we utilized the definitions of a $G$-realization~\eqref{eq:Grealization} and of the force cosheaf extension map in line~\eqref{eq:force_extension}.
    
    The force cosheaf $\calf$ and its cosheaf representation $\rho$ are used in most other sources of equivariant trusses \cite{connelly_guest_2022,SCHULZE2023112492}.
\end{example}

\begin{figure}
    \centering
    \includegraphics[scale = 1.4] {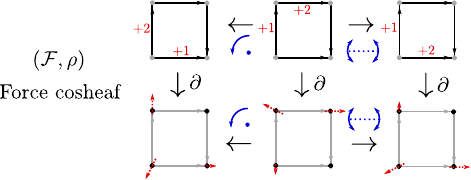}
    \caption{The $D_8$-force cosheaf $(\calf, \rho)$ is pictured over a square. Two of the edges are in varying degrees of compression, and we check condition (i) of Definition~\ref{def:Gcosheaf} for $\calf$.}
    \label{fig:rho_1}
\end{figure}

Recall that $\mu^{(1)}, \dots , \mu^{(m)}$ denotes the irreducible representations of $G$ unique up to isomorphism. For each dimension index $i$ we have that $C_i\calk, \rho$ is isomorphic to a direct sum of $G$-submodules
\begin{equation*}
    C_i\calk \iso C_i^{(1)}\calk \oplus C_i^{(2)}\calk \oplus \dots \oplus C_i^{(m)}\calk
\end{equation*}
where $C_i^{(j)}\calk$ is isomorphic to the direct sum of $d_i^{(j)}\geq 0$ copies of the irreducible $G$-module $(k^{n_j}, \mu^{(j)})$. Each space $C_i^{(j)}\calk$ has dimension $d_i^{(j)}n_j$.

Let $\bdd_i^{(j)}$ denote the restriction of the boundary map $\bdd_i$ to the subspace $C_i^{(j)}\calk$. By Theorem~\ref{thm:Ghomomorphism} it follows that $\bdd_i$ can be represented by the sum
\begin{equation}\label{eq:split_bdd}
    \bdd_i = \bdd_i^{(1)} \oplus \dots \oplus \bdd_i^{(m)}
\end{equation}
over the irreducible $G$-submodules. The image of $\bdd_i^{(j)}$ is zero on $C_{i-1}^{(t)}$ for any $t\neq j$.
\begin{lemma}
    Any $G$-cosheaf chain complex $C(\calk, \rho)$ is isomorphic to the decomposition
\begin{equation}\label{eq:chain_irrid}
    C \calk \iso C^{(1)}\calk \oplus \dots \oplus C^{(m)}\calk
\end{equation}
where each chain complex $C^{(j)}\calk$ is of the form
\begin{equation*}
    \dots \to C_{i+1}^{(j)}\calk \xrightarrow{ \bdd_{i+1}^{(j)}} C_i^{(j)}\calk \xrightarrow{ \bdd_i^{(j)} } C_{i-1}^{(j)}\calk \to \dots
\end{equation*}
\end{lemma}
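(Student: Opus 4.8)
The plan is to recognize this lemma as a direct application of the representation-theoretic splitting already established in Section~\ref{sec:reps}, specialized to the case where the ambient sequence of $G$-modules is the boundary chain complex of the $G$-cosheaf. Concretely, I would instantiate Theorem~\ref{thm:Gsequence} (equivalently, apply Theorem~\ref{thm:Ghomomorphism} in each degree) by taking the $G$-modules $V_i$ to be the chain spaces $C_i\calk$ and the maps $\phi_i$ to be the boundary operators $\bdd_i$. The only genuinely cosheaf-specific ingredient is that each $\bdd_i$ is a $G$-homomorphism, which is precisely point (i) of Definition~\ref{def:Gcosheaf}: for every $g\in G$ one has $\rho_{i-1}(g)\circ\bdd_i=\bdd_i\circ\rho_i(g)$. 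Combined with the observation, recorded right after that definition, that each $C_i\calk$ is a $G$-module under $\rho_i$, this places us squarely within the hypotheses needed.

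First I would invoke the canonical isotypic decomposition~\eqref{eq:Gsubrep} to write each chain space as $C_i\calk\iso C_i^{(1)}\calk\oplus\dots\oplus C_i^{(m)}\calk$, where $C_i^{(j)}\calk$ is the basis-independent, canonically determined sum of all factors isomorphic to $\mu^{(j)}$. Next I would apply Theorem~\ref{thm:Ghomomorphism} to each differential, yielding $\bdd_i=\bdd_i^{(1)}\oplus\dots\oplus\bdd_i^{(m)}$ with $\bdd_i^{(j)}:C_i^{(j)}\calk\to C_{i-1}^{(j)}\calk$; in particular no component of $\bdd_i$ carries the $\mu^{(j)}$-part of $C_i\calk$ into the $\mu^{(t)}$-part of $C_{i-1}\calk$ for $t\neq j$. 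Because the isotypic components are canonically determined rather than chosen, these per-degree splittings are automatically coherent across all dimensions at once, which is what lets the individual restricted maps be assembled into bona fide chain complexes rather than merely isolated arrows.

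It then remains to check that each candidate subsequence $C^{(j)}\calk$ really is a chain complex, i.e. that $\bdd_i^{(j)}\circ\bdd_{i+1}^{(j)}=0$. Here I would use that the full differential satisfies $\bdd_i\circ\bdd_{i+1}=0$ together with the block-diagonal structure: writing both $\bdd_i$ and $\bdd_{i+1}$ with respect to the common isotypic splitting, the off-diagonal blocks vanish and the composite is itself block diagonal, $\bdd_i\circ\bdd_{i+1}=\bigoplus_j(\bdd_i^{(j)}\circ\bdd_{i+1}^{(j)})$. Since a direct sum of maps is zero exactly when each summand is, we obtain $\bdd_i^{(j)}\circ\bdd_{i+1}^{(j)}=0$ for all $i,j$. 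Reassembling, the collection $(C_i^{(j)}\calk,\bdd_i^{(j)})$ is a chain complex $C^{(j)}\calk$, and the direct sum over $j$ recovers $C\calk$ as an isomorphism of chain complexes, establishing~\eqref{eq:chain_irrid}.

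I do not anticipate a serious obstacle, as the content is essentially bookkeeping resting on Schur's Lemma~\ref{lem:schur}. If there is any delicate point, it is ensuring that the decomposition commutes with taking boundaries simultaneously in every degree; the resolution is precisely that the isotypic projections $C_i\calk\to C_i^{(j)}\calk$ are canonical and $G$-equivariant, so every $G$-homomorphism, and hence every $\bdd_i$, respects them by Theorem~\ref{thm:Ghomomorphism}.
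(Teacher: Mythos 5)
Your proposal is correct and follows exactly the paper's route: the paper's proof is the one-line observation that the lemma is Theorem~\ref{thm:Gsequence} applied to the boundary maps $\bdd_i$ of the chain complex, which is precisely your instantiation $V_i = C_i\calk$, $\phi_i = \bdd_i$. Your additional verification that $\bdd_i^{(j)}\circ\bdd_{i+1}^{(j)}=0$ via the block-diagonal structure is a harmless elaboration of what the paper leaves implicit.
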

\begin{proof}
    This is Theorem~\ref{thm:Gsequence} applied to the boundary maps $\bdd_i$ of the chain complex.
\end{proof}

\subsection{Equivariant Cosheaf Maps}

\begin{definition}[$G$-cosheaf map]\label{def:Gchain_map}
    Suppose $G$ is a finite group, $(X,\alpha)$ is a $G$-cell complex, $(\calk, \rho)$ and $(\call, \eta)$ are $G$-cosheaves and $\phi:\calk \to \call$ is a cosheaf map. We say that $\phi$ is a {\it $G$-cosheaf map} if it satisfies the commutative diagram
    \begin{equation}\label{eq:chain_cube}
        \begin{tikzcd}[column sep={between origins,5em}]
            C_i\calk \ar[rr, "\phi"] \ar[dd, "\bdd"] \ar[rd, "\rho(g)"] 
            & & C_i\call \ar[dd, "\bdd", near end] \ar[rd, "\eta(g)"] & \\
            & C_i\calk \ar[rr, "\phi", crossing over, near start] 
            & & C_i\call \ar[dd, "\bdd"] \\
            C_{i-1}\calk \ar[rr, "\phi", near start] \ar[rd, "\rho(g)", swap] & & C_{i-1}\call \ar[rd, "\eta(g)"] & \\
            & C_{i-1}\calk \ar[rr, "\phi"] \ar[from=uu, "\bdd", crossing over, near end]
            & & C_{i-1}\call
        \end{tikzcd}
    \end{equation}
    on chains for every index $i$ and group element $g\in G$. The composition of maps over every path from $C_i\calk$ to $C_{i-1}\call$ must be equal.
\end{definition}

The commutativity of the front and back squares of diagram~\eqref{eq:chain_cube} follow from $\phi$ being a chain map and the commutativity of the left and right squares follow by both $\calk$ and $\call$ being $G$-cosheaves (by assumption). The only statements that must be checked are the commutativity of the top and bottom squares of diagram~\eqref{eq:chain_cube}. In particular, for every $i$-chain $x\in C_i \calk$ it must be true that
\begin{equation}\label{eq:Gcosheaf_rep_map}
    \phi_{gc} \rho_c(g) x_c = \eta_c(g) \phi_c x_c.
\end{equation}

Every $G$-cosheaf map $\phi$ consists of a family of $G$-homomorphisms on chain spaces $\{\phi: C_i \calk \to C_i \call\}$.

A short exact sequence of {\it $G$-cosheaf maps}
\begin{equation}\label{eq:Gchain_ses}
    0 \to C(\calk, \rho) \xrightarrow{\phi} C(\call, \eta) \xrightarrow{\psi} C(\calm, \mu) \to 0
\end{equation}
is a short exact sequence of cosheaf maps, equivariant under the respective representation actions. From any injective $G$-cosheaf map $\phi: (\calk, \rho) \to (\call, \eta)$ the quotient $G$-cosheaf $(\call/\phi\calk, \eta/\phi\rho)$ has stalks $(\call/\phi\calk)_c = \call_c / \im \phi_c \calk_c$. The group action on stalks is the action $\eta$ on quotient classes:
\begin{equation*}
\begin{aligned}
    (\eta/\phi\rho)_c(g)(x+\im \phi_c)
    & = \eta_c(g)(x) + \im (\eta_c(g)\circ \phi_c)
    \\
    & = \eta_c(g)(x) + \im (\phi_{gc}\circ\rho_c(g) \phi_c)
    \\
    & = \eta_c(g)(x) + \im \phi_{gc}
\end{aligned}
\end{equation*}

We simplify the notation letting $\eta/\rho$ denote the representation $\eta/\phi \rho$.

\begin{example}[Cyclic and dihedral position cosheaf]
\label{ex:ZmDm_cosheaf_map}
    Letting $G$ be cyclic or dihedral, recall from Example~\ref{ex:ZmDm_force} and Example~\ref{ex:ZmDm_constant} that we defined the appropriate cosheaf representations for the force $\calf$ and constant $\overline{\R^2}$ cosheaves. In Example~\ref{ex:graphic_statics} we developed classical planar graphic statics and proved that the structure of the position cosheaf $\calp$ can be deduced from these previous two cosheaves (without $G$-action). We wish to do the same {\it while including the $G$-action}, namely we show that the appropriate representation of $\calp$ can be derived purely from the representions of $(\calf, \rho)$ and $(\overline{\R^2}, \eta)$. This is the subject of Lemma~\ref{lem:ZmDm_position}, and we describe the $G$-cosheaf $(\calp, \eta/\rho)$ for now.
    
    When assuming the underlying $G$-cosheaf is an oriented $2$-manifold, it is possible to assign every face a consistent local orientation. Then $[gf,f]= \pm 1$ depending on if $g$ is a rotation or a reflection. There is a simple formulation of $\eta/\rho$, namely ${\eta/ \rho}_2 = \eta_2$ consisting of local maps
    \begin{equation}\label{eq:position_rep_face}
        {\eta/\rho}_f(g) = \begin{cases}
            +\tau(g) & g\text{ is a rotation} \\
            -\tau(g) & g\text{ is a reflection}
        \end{cases},
    \end{equation}
    which is always positive when the group $G$ is $\Z_m$. Over edges the representation $\eta / \rho_1$ is similar
    \begin{equation}\label{eq:position_rep_edge}
        {\eta / \rho}_e(g) = \begin{cases}
            +1 & g\text{ is a rotation}\\
            -1 & g\text{ is a reflection}
        \end{cases}.
    \end{equation}
\end{example}

We emphasize that it is important to deduce properties of the quotient $G$-cosheaf $(\calp, \eta/\rho)$ purely in terms of its priors $(\calf, \rho)$ and $(\overline{\R^2}, \eta)$. Universal properties (here, of quotients) are extremely powerful, and  properties of any quotient $G$-cosheaf can be derived in an algorithmic manner by diagram chasing. The authors hope the methods used in Lemma~\ref{lem:ZmDm_position} guide future derivations of quotient $G$-actions.

\begin{remark}\label{rem:sign_flip}
The sign flip in Equations~\eqref{eq:position_rep_face} and~\eqref{eq:position_rep_edge} has counter-intuitive effects. Take the reflection $s$ (in some dihedral group) along the vertical axis; the standard representation $\tau$ takes value
\begin{equation*}
    \tau(s) = \begin{small}
    \begin{bmatrix}
        -1 & 0 \\
        0 & 1
    \end{bmatrix}
    = -\begin{bmatrix}
        1 & 0 \\
        0 & -1
    \end{bmatrix}\end{small},
\end{equation*}
inverting the first coordinate. Then at any face $f$, $\eta/\rho_f(s)$ takes the value $-\tau(s)$, which is a matrix that inverts the second coordinate, ``acting like'' a reflection along the horizontal axis. We can think of $\eta/\rho$ having dual mirror symmetry in the dihedral group, a phenomenon previously noticed in \cite{SCHULZE2023112492}. An example of this mirror-transformation is clearly visible in Figure~\ref{fig:mirror_gs}, also pictured in detail in Figure~\ref{fig:eta_2}.
\end{remark}

\begin{lemma}\label{lem:ZmDm_position}
    The position cosheaf $(\calp, \eta/\rho)$ defined in the above Example~\ref{ex:ZmDm_cosheaf_map} indeed is a $\Z_n$- or $D_n$-cosheaf and $\pi:(\overline{\R^2}, \eta) \to (\calp, \eta/\rho)$ is a $\Z_n$- or $D_n$-cosheaf map.
\end{lemma}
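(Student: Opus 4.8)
The plan is to derive both assertions from the general quotient construction for $G$-cosheaf maps recorded after~\eqref{eq:Gchain_ses}, so that the only genuinely new verification is that the inclusion $\phi\colon\calf\to\overline{\R^2}$ of Example~\ref{ex:graphic_statics} is itself equivariant; the explicit formulas~\eqref{eq:position_rep_face} and~\eqref{eq:position_rep_edge} are then identified with the induced quotient action by a direct stalk computation.

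First I would check that $\phi\colon(\calf,\rho)\to(\overline{\R^2},\eta)$ satisfies the equivariance condition~\eqref{eq:Gcosheaf_rep_map} at every cell. Over a vertex $v$ this is immediate, since $\phi_v=\id$ while $\rho_v(g)=\tau(g)=\eta_v(g)$, using $[gv,v]=+1$. Over an edge $u,v\lhd e$ I would evaluate both sides on the generator $1\in\calf_e$: the left side $\phi_{ge}\rho_e(g)(1)$ is the normalization of $[gu:ge](p_{gu}-p_{gv})$, whereas the right side $\eta_e(g)\phi_e(1)$ equals $[ge,e]\tau(g)$ applied to the normalization of $[u:e](p_u-p_v)$. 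Invoking the $G$-realization identity $\tau(g)(p_u-p_v)=p_{gu}-p_{gv}$, together with the fact that $\tau(g)$ preserves lengths, the two sides coincide exactly when $[gu:ge]=[ge,e][u:e]$, which is the sign-commutation relation~\eqref{eq:sign_commute} specialized to $u\lhd e$ (with $[gu,u]=+1$). This establishes that $\phi$ is an injective $G$-cosheaf map.

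With $\phi$ equivariant, the general construction applies: since $\im\phi$ is an $\eta$-invariant subcomplex (by equivariance of $\phi$), the quotient action $\eta/\rho$ descends and commutes with the induced boundary, so $(\overline{\R^2}/\phi\calf,\eta/\phi\rho)=(\calp,\eta/\rho)$ satisfies condition (i) of Definition~\ref{def:Gcosheaf} and is a $G$-cosheaf. Moreover the displayed computation defining this action shows the projection $\pi$ satisfies~\eqref{eq:Gcosheaf_rep_map}, and as $\pi$ is already a cosheaf map the full cube~\eqref{eq:chain_cube} commutes, making $\pi$ a $G$-cosheaf map. This settles both claims abstractly, leveraging the universal property of quotients exactly as advertised.

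It remains to confirm that this abstract $\eta/\rho$ is the one written in~\eqref{eq:position_rep_face} and~\eqref{eq:position_rep_edge}. Over a face $f$ we have $\calf_f=0$, hence $\calp_f=\overline{\R^2}_f$ and $(\eta/\rho)_f=\eta_f=[gf,f]\tau(g)$; on an oriented $2$-manifold $[gf,f]=+1$ for a rotation and $-1$ for a reflection, giving~\eqref{eq:position_rep_face}. Over an edge I would identify the one-dimensional quotient $\calp_e=\R^2/\mathrm{span}\{p_u-p_v\}$ with $\R$ through the oriented unit normal $n_e$ and descend $\eta_e(g)=[ge,e]\tau(g)$. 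The key computation is that $\tau(g)n_e=[ge,e]n_{ge}$ for rotations while $\tau(g)n_e=-[ge,e]n_{ge}$ for reflections, the extra sign arising because a reflection anticommutes with the $90^\circ$ rotation defining the normal; the two occurrences of $[ge,e]$ then cancel, leaving the scalar $+1$ for rotations and $-1$ for reflections as in~\eqref{eq:position_rep_edge}. I expect this edge computation to be the main obstacle, since it is where the orientation signs $[ge,e]$ must be tracked carefully and where the surviving sign is precisely the counter-intuitive mirror flip of Remark~\ref{rem:sign_flip}; a useful consistency check is that the face and edge formulas must intertwine the extension map $\calp_{f\rhd e}$, which again reduces to~\eqref{eq:sign_commute} applied to $e\lhd f$.
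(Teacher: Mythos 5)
Your proposal is correct and takes essentially the same approach as the paper: both arguments first verify that $\phi$ is a $G$-cosheaf map via the sign relation~\eqref{eq:sign_commute}, then transfer the action to the quotient $\calp$, with the decisive computation in both cases being that reflections anticommute with the quarter-turn ${\bf R}(\pi/2)$ (the paper phrases this as the commutator $[{\bf R}(\pi/2),\tau(g)]$ in~\eqref{eq:pos_rep_computation}), producing the sign flip in~\eqref{eq:position_rep_edge}. The only difference is organizational: the paper checks each square of the cube~\eqref{eq:chain_cube} for $\pi$ concretely, whereas you obtain conditions (i) and (ii) for $(\calp,\eta/\rho)$ abstractly from the invariance of the subcomplex $\phi\, C\calf$ and then identify the explicit stalk formulas~\eqref{eq:position_rep_face} and~\eqref{eq:position_rep_edge}.
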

\begin{proof}
    Fix the group $G$ as $\Z_m$ or $D_{2m}$. 
    We first prove the map $\phi:\calf \to \overline{\R^2}$ is in-fact a $G$-cosheaf map satisfying the condition of line~\eqref{eq:Gcosheaf_rep_map} at all cells and group elements. Let $(\calf,\rho)$ and $(\overline{\R^2},\eta)$ be the force and constant $G$-cosheaves of Examples~\ref{ex:ZmDm_constant} and~\ref{ex:ZmDm_force}. Line~\eqref{eq:Gcosheaf_rep_map} is satisfied trivially over vertices, as both $\rho_0$ and $\eta_0$ are equivalent to the same representation $\tau\otimes\alpha_0$. For commutativity over edge cells, we note that $\rho_e(g) = +1$ and that $\eta_e(g) = [ge,e]\tau(g)$. Then the orientation of the edge $e$ being preserved/reversed to $ge$ is equivalent to the base of $e$ (the vector $(p_u - p_v)$ for $u,v\lhd e$) being preserved/reflected, and
    \begin{equation}\label{eq:phi_commute}
        \phi_{ge} = [ge,e]\tau(g)\phi_e
    \end{equation}

    Because $\phi$ is a $G$-cosheaf map, the image $\phi C \calf$ is a $G$-submodule of the chain complex $C \overline{\R^2}$. We next prove that that quotient map $\pi$ from Example~\ref{ex:graphic_statics} is also a well-defined $G$-cosheaf map, and in the process show that line~\eqref{eq:position_rep_face} and line~\eqref{eq:position_rep_edge} hold true.
    
    We check the commutativity of the diagram~\eqref{eq:chain_cube} for the map $\pi$ over $2$ and $1$ chains. By assumption $\pi$ is a (regular) cosheaf map, meaning the front and back squares of the diagram commute. Also the left square commutes by the construction of $(\overline{\R^2}, \eta)$ in Example~\ref{ex:ZmDm_constant}. Clearly $\pi \circ \eta_2(g) = \eta/\rho_2(g) \circ \pi$ as maps from $C_2 \overline{\R^2}$ to $C_2 \calp$ so the top square commutes.
    
    We show that $\pi \circ \eta_1(g) = (\eta/\rho)_1(g) \circ \pi$, verifying the commutativity of the bottom square of diagram~\eqref{eq:chain_cube}. For computations, we associate $\calp_e$ over an edge $u,v\lhd e$  with the orthogonal space $(\phi \calf_e)^\perp$ in $\R^2$ by rotating the unit vector $(p_u - p_v) = \phi_e (1)$ generating $\phi \calf_e$ by angle $\pi/2$ clockwise
    \begin{equation*}
        \calp_e \iso \text{span}\{{\bf R}(\pi/2) \phi_e (1)\}.
    \end{equation*}
    where ${\bf R}(\cdot)$ is the rotation matrix by the specified angle. Then we define $\pi_1: C_1\overline{\R^2} \to C_1\calp$ by setting $\pi_e(y_e) = \langle {\bf R}(\pi/2) \phi_e(1), y_e \rangle$ for $y\in C_1 \overline{\R^2}$. We know that at an edge $e$ and $g\in G$, $\eta/\rho_e(g)$ is a scalar, thus we have
    \begin{equation}\label{eq:pos_rep_computation}
        \begin{aligned}
            \pi_{ge}\circ \eta_e(g) (y_e)
            & = {\eta/\rho}_e \circ \pi_e (y_e) \\
            [ge,e]\langle {\bf R}(\pi/2) \phi_{ge}(1), \tau(g) y_e \rangle
            & = {\eta/\rho}_e(g)\langle {\bf R}(\pi/2) \phi_e(1), y_e \rangle
            \\
            [ge,e]^2 \tau(g)^{-1} {\bf R}(\pi/2) \tau(g) \phi_e(1)
            & = {\eta/\rho}_e(g){\bf R}(\pi/2) \phi_e(1)
        \end{aligned}
    \end{equation}
    using line~\eqref{eq:phi_commute}. This implies that
    \begin{equation*}
        {\eta/\rho}_e(g) \phi_e(1)
        = {\bf R}(\pi/2)^{-1} \tau(g)^{-1} {\bf R}(\pi/2)\tau(g)\phi_e(1)
        = [{\bf R}(\pi/2), \tau(g)]\phi_e(1)
    \end{equation*}
    the commutator of the two orthogonal matrices. If $g$ is a rotation, the matrices commute and ${\eta/\rho}_e = +1$. When $g$ is a reflection then
    \begin{equation*}
        {\bf R}(\pi/2)^{-1} \left(\tau(g)^{-1} {\bf R}(\pi/2) \tau(g) \right)
        = {\bf R}(\pi/2)^{-1} {\bf R}(\pi/2)^{-1}
        = {\bf R}(\pi)
    \end{equation*}
    sends $\phi_e(1)$ to $-\phi_e(1)$.

    For the final right commutativity square of diagram~\eqref{eq:chain_cube}, we confirm that $(\calp, \eta/\rho)$ is indeed a $G$-cosheaf by checking point (i) of Definition~\ref{def:Gcosheaf}. For $x\in C_2 \calp$ the following equations are equivalent,
    \begin{equation}\label{eq:right_square}
    \begin{aligned}
        \relax [ge:gf] \calp_{gf \rhd ge} \eta/\rho_f(g) x_f
        & = {\eta/\rho}_e(g) [e:f] \calp_{f\rhd e} x_f
        \\
        [ge:gf][gf,f] \langle {\bf R}(\pi/2)\phi_{ge}(1), \tau(g) x_f \rangle
        & = {\eta/\rho}_e(g) [e:f]\langle {\bf R}(\pi/2) \phi_e (1), x_f \rangle,
    \end{aligned}
    \end{equation}
    which after using equation~\eqref{eq:sign_commute}, ~\eqref{eq:right_square} is identical to line~\eqref{eq:pos_rep_computation} swapping variables $x$ and $y$. Thus $\pi$ is a $G$-map between $G$-cosheaves.
\end{proof}

\begin{figure}
    \centering
    \includegraphics[scale = 1.4]{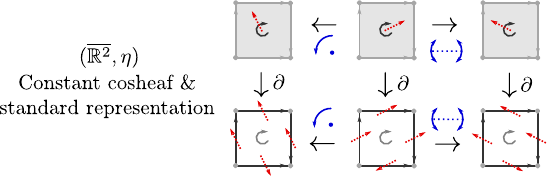}
    \caption{Generators for the representation of the $D_8$-constant cosheaf $(\overline{\R^2}, \eta_2)$ are pictured acting on a square face (two-dimensional data). This is a continuation of Figure~\ref{fig:eta_1}. To the left, the stalks are permuted, rotated by $\pi/2$ counter-clockwise, then resigned by multiplying by the scalar $[gc,c]$. To the right, the stalks are permuted, reflected along the horizontal axis, then resigned. The composition of reflection along the horizontal axis and negation misleadingly appears like a reflection along the vertical axis, following Remark~\ref{rem:sign_flip}.}
    \label{fig:eta_2}
\end{figure}

\begin{figure}
    \centering
    \includegraphics[scale = 1.4]{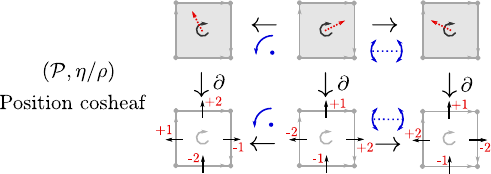}
    \caption{Generators for the representation of $(\calp, \eta/\rho)$ are pictured acting on a square face. Note that the top row is identical to that of Figure~\ref{fig:eta_2}, and the basis vectors orthogonal to edges are drawn in the bottom row. The vector in the top-center square has components $(+2, +1)$ in the $x$ and $y$ directions. To the left, vectors/scalars are permuted and rotated by $\pi/2$ counter-clockwise. To the right, the square is reflected along the horizontal axis, then some vectors/scalars reverse sign following equation~\eqref{eq:position_rep_face}. The  composition appears like a reflection along the vertical axis, following Remark~\ref{rem:sign_flip}.}
    \label{fig:eta_rho_2}
\end{figure}

As consequence of Example~\ref{ex:ZmDm_cosheaf_map} and Lemma~\ref{lem:ZmDm_position}, we know that 
\begin{equation}\label{eq:GS_chainses}
    0 \to C(\calf, \rho) \xrightarrow{\phi} C(\overline{\R^2}, \eta) \xrightarrow{\pi} C(\calp, \eta/\rho) \to 0.
\end{equation}
is an exact sequence of $\Z_m$- or $D_{2m}$-cosheaves.

\section{Irreducible Representations and Homology}
\label{sec:irrid_homology}

Previously we confirmed that cosheaves and maps between cosheaves can be enriched with group representations. From this groundwork we have the methods for separating cosheaf chains and homology cycles into their constituent irreducible components, each respecting one of the underlying symmetries of the framework.

\begin{lemma}\label{lem:split_complex}
    To any short exact sequence of cosheaf chain complexes of the form~\eqref{eq:Gchain_ses}, there is a short exact sequence of $G$-cosheaf chain complexes for each irreducible representation $\mu^{(j)}$ of $G$:
    \begin{equation}\label{eq:split_complex}
        0 \to C^{(j)}(\calk, \rho) \xrightarrow{\phi^{(j)} } C^{(j)}(\call, \eta) \xrightarrow{\psi^{(j)} } C^{(j)}(\call / \calk, \eta/\rho) \to 0
    \end{equation}
\end{lemma}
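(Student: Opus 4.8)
The plan is to apply the isotypic (irreducible) decomposition one homological degree at a time, in both the chain-complex direction and the short-exact-sequence direction, and then to check that these two decompositions are compatible. Fix an irreducible $\mu^{(j)}$. The objects of \eqref{eq:split_complex} are already supplied by the decomposition \eqref{eq:chain_irrid}: applying it separately to $C(\calk,\rho)$, $C(\call,\eta)$ and $C(\call/\calk,\eta/\rho)$ produces the three subcomplexes $C^{(j)}(\calk,\rho)$, $C^{(j)}(\call,\eta)$ and $C^{(j)}(\call/\calk,\eta/\rho)$, each with restricted boundary $\bdd_i^{(j)}$ satisfying $\bdd_{i-1}^{(j)}\circ\bdd_i^{(j)}=0$. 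So no new work is needed to obtain the chain complexes themselves.

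Next I would produce the horizontal maps $\phi^{(j)}$ and $\psi^{(j)}$. Since $\phi$ and $\psi$ are $G$-cosheaf maps, each component $\phi_i,\psi_i$ is a $G$-homomorphism on the chain space (as noted just after Definition~\ref{def:Gchain_map}). Theorem~\ref{thm:Ghomomorphism} then splits each of them block-diagonally over the canonical isotypic components, yielding $\phi_i^{(j)}:C_i^{(j)}\calk\to C_i^{(j)}\call$ and $\psi_i^{(j)}:C_i^{(j)}\call\to C_i^{(j)}(\call/\calk)$; assembling over $i$ defines $\phi^{(j)}$ and $\psi^{(j)}$. To see these are chain maps, observe that the boundary maps are $G$-homomorphisms as well, so every arrow in the bicomplex respects the canonical isotypic splitting: a $G$-homomorphism carries the $(j)$-summand of its source into the $(j)$-summand of its target and sends the $(t)$-summands to zero for $t\neq j$ (Schur's Lemma~\ref{lem:schur}). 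Consequently the identity $\phi_{i-1}\circ\bdd_i=\bdd_i\circ\phi_i$ is itself block diagonal with respect to adapted bases, and reading off its $(j)$-block gives $\phi_{i-1}^{(j)}\circ\bdd_i^{(j)}=\bdd_i^{(j)}\circ\phi_i^{(j)}$; the same argument applies to $\psi^{(j)}$.

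It remains to verify exactness of \eqref{eq:split_complex} as a short exact sequence of chain complexes, which amounts to degreewise exactness. For each fixed $i$ the column $0\to C_i\calk\xrightarrow{\phi}C_i\call\xrightarrow{\psi}C_i\calm\to 0$ is, by hypothesis, a short exact sequence of $G$-modules. Applying the final (``moreover'') clause of Theorem~\ref{thm:Gsequence} to this three-term exact sequence shows that its restriction $0\to C_i^{(j)}\calk\to C_i^{(j)}\call\to C_i^{(j)}\calm\to 0$ to the $(j)$ isotypic component is again exact. Since this holds in every degree $i$, the sequence \eqref{eq:split_complex} is short exact.

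The only genuine subtlety — the main point to get right — is the compatibility of the two decompositions: the splitting used to build the subcomplexes (chain direction) and the splitting used to obtain exactness (short-exact-sequence direction) must be one and the same splitting on each vector space $C_i\call$, and likewise for $C_i\calk$ and $C_i\calm$. This is automatic because the isotypic decomposition $V\iso V^{(1)}\oplus\dots\oplus V^{(m)}$ of \eqref{eq:Gsubrep} is canonical, depending on no choices, so every $G$-homomorphism in the diagram — horizontal ($\phi,\psi$), vertical ($\bdd$), and all their composites — is simultaneously block diagonal for one common family of adapted bases. With this observed, the entire argument reduces to extracting the $(j)$-block from the relations already in hand, and no further computation is required.
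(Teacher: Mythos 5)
Your proof is correct and takes essentially the same route as the paper's: degreewise exactness of the isotypic sequences via Theorem~\ref{thm:Gsequence}, combined with the fact that the boundary maps themselves decompose along irreducible components (Theorem~\ref{thm:Ghomomorphism}, line~\eqref{eq:split_bdd}) so that the degreewise splittings assemble into a short exact sequence of chain complexes. Your closing remark on the canonicity of the isotypic decomposition merely makes explicit a compatibility the paper leaves implicit; the substance of the argument is the same.
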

\begin{proof}
    Each component chain map $\phi_i:C_i\calk \to C_i \call$ and $\psi_i: C_i \call \to C_i \calm$ are $G$-homomorphisms. By Theorem~\ref{thm:Gsequence} the sequence
    \begin{equation}\label{eq:split_chains}
        0 \to C^{(j)}_i(\calk, \rho) \xrightarrow{\phi_i^{(j)} } C^{(j)}_i(\call, \eta) \xrightarrow{\psi_i^{(j)} } C^{(j)}_i(\call / \calk, \eta/\rho) \to 0.
    \end{equation}
    of $i$ chains is exact. We know $\phi_i$ and $\psi_i$ commute with the respective cosheaf boundary maps $\bdd$ which likewise decompose along the irreducible components in line~\eqref{eq:split_bdd}. Thus exact sequences of $G$-modules~\eqref{eq:split_chains} extend to exact sequences of $G$-chain complexes~\eqref{eq:split_complex}.
\end{proof}

The long exact sequence~\eqref{eq:les} respects the group action of the $G$-cosheaves. We have seen that representations $\rho(g): C\calk \to C\calk$ and $\eta(g): C\call \to C\call$ are chain complex automorphisms for each $g\in G$. These maps (as quasi-automorphisms) induce isomorphisms in homology $\rho(g): H\calk \to H \calk$ and $\eta: H \call \to H \call$. The following diagram with exact rows commutes for every $g\in G$:
\begin{equation}\label{eq:Gles}
\begin{tikzcd}
    \dots \ar[r, "\psi"] &
    H_{i+1} \call / \calk \ar[r, "\vartheta"] \ar[d, "\eta/\rho(g)"] &
    H_i \calk \ar[r, "\phi"] \ar[d, "\rho(g)"] &
    H_i\call \ar[r, "\psi"] \ar[d, "\eta(g)"] &
    H_i \call / \calk \ar[r, "\vartheta"] \ar[d, "\eta/\rho(g)"] &
    \dots \\
    \dots \ar[r, "\psi"] &
    H_{i+1} \call / \calk \ar[r, "\vartheta"] &
    H_i \calk \ar[r, "\phi"] &
    H_i\call \ar[r, "\psi"] &
    H_i \call / \calk \ar[r, "\vartheta"] &
    \dots \\
\end{tikzcd}
\end{equation}
following from the naturality of the long exact sequence \cite{AlgebraicHatcher2002}.

\begin{lemma}\label{lem:split_les}
    Every long exact sequence of $G$-cosheaf homology splits into irreducible factors. For a short exact sequence of $G$-cosheaves, to each irreducible representation $\mu^{(j)}$ of $G$ the sequence
    \begin{equation}\label{eq:split_les}
        \dots \to H_{i+1}^{(j)} \call / \calk \xrightarrow{\vartheta^{(j)}} H_i^{(j)} \calk \xrightarrow{\phi^{(j)}} H_i^{(j)} \call \xrightarrow{\psi^{(j)}} H_i^{(j)} \call / \calk \to \dots
    \end{equation}
    is exact.
\end{lemma}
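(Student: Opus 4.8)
The plan is to recognize the long exact sequence~\eqref{eq:les} as an exact sequence of $G$-modules whose connecting maps are all $G$-homomorphisms, and then apply Theorem~\ref{thm:Gsequence} verbatim. That theorem already guarantees that any exact sequence of $G$-modules linked by $G$-homomorphisms decomposes, in an adapted basis, into exact sequences on each isotypic summand. So the entire content of the lemma reduces to checking that the long exact sequence satisfies these two hypotheses: that its terms are $G$-modules and that its maps are $G$-homomorphisms.

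First I would establish that each homology space appearing in~\eqref{eq:les} is a $G$-module. By point (i) of Definition~\ref{def:Gcosheaf}, each $\rho(g)$ commutes with $\bdd$, so it is a chain automorphism and the boundary maps $\bdd_i$ are $G$-homomorphisms; the same holds for $\eta$ and $\eta/\rho$. Theorem~\ref{thm:Gkernel} then makes $\ker\bdd_i$ and $\im\bdd_{i+1}$ into $G$-submodules, so their quotient $H_i\calk$ inherits a $G$-module structure, and likewise for $H_i\call$ and $H_i\call/\calk$.

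Next I would check that the three families of maps $\phi$, $\psi$, and $\vartheta$ are $G$-homomorphisms on homology. For $\phi$ and $\psi$ this is immediate, since they are induced by $G$-cosheaf maps and hence commute with the induced homology actions. The only genuinely nontrivial point -- and the one I expect to be the main obstacle -- is the equivariance of the connecting homomorphism $\vartheta$: a priori it is defined only by a diagram chase, so its compatibility with the group action is not formal. However, this is exactly the content of the commuting diagram~\eqref{eq:Gles}, which holds by the naturality of the long exact sequence applied to each chain automorphism $\rho(g)$, $\eta(g)$, and $(\eta/\rho)(g)$. The vertical maps in~\eqref{eq:Gles} are precisely the homology actions, so commutativity of the squares containing $\vartheta$ says $\vartheta \circ (\eta/\rho)(g) = \rho(g) \circ \vartheta$ for all $g$, i.e. $\vartheta$ is a $G$-homomorphism.

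With both hypotheses verified, I would conclude by invoking Theorem~\ref{thm:Gsequence} directly: the sequence~\eqref{eq:les} is an exact sequence of $G$-modules connected by $G$-homomorphisms, so its restriction to the $\mu^{(j)}$-isotypic summands -- the maps $\phi^{(j)}$, $\psi^{(j)}$, $\vartheta^{(j)}$ acting on $H_i^{(j)}\calk$, $H_i^{(j)}\call$, and $H_i^{(j)}\call/\calk$ -- is exact for every index $(j)$. This is precisely the sequence~\eqref{eq:split_les}.
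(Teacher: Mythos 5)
Your proof is correct, but it runs in the opposite order from the paper's. The paper splits \emph{before} taking homology: it applies Lemma~\ref{lem:split_complex} to get, for each $(j)$, a short exact sequence of $G$-cosheaf chain complexes~\eqref{eq:split_complex}, declares~\eqref{eq:split_les} to be the long exact sequence of that component complex (so exactness is automatic from standard homological algebra), and then spends its remaining effort identifying the connecting map $\vartheta^{(j)}$ of the component sequence with the $(j)$-th isotypic block of the full connecting homomorphism $\vartheta$, via Theorem~\ref{thm:Gsequence} and diagram~\eqref{eq:Gles}. You instead split \emph{after} taking homology: you treat the full long exact sequence~\eqref{eq:les} as a sequence of $G$-modules, verify that every map in it is a $G$-homomorphism --- correctly isolating the equivariance of $\vartheta$ as the only non-formal point, settled by the naturality diagram~\eqref{eq:Gles} --- and then invoke Theorem~\ref{thm:Gsequence} once at the homology level. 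Both routes rest on the same two ingredients (diagram~\eqref{eq:Gles} and Theorem~\ref{thm:Gsequence}), but they distribute the work differently: the paper gets exactness for free and must match up the maps, while you get the maps for free (they are literally the restrictions of $\phi$, $\psi$, $\vartheta$) and must earn exactness from the splitting theorem. Your route also bypasses Lemma~\ref{lem:split_complex} entirely, which makes it more self-contained. One small point left implicit in your argument: your $H_i^{(j)}\calk$ is the $\mu^{(j)}$-isotypic summand of $H_i\calk$, whereas the paper's is the homology of the component complex $C^{(j)}\calk$; these agree because homology commutes with the direct sum decomposition~\eqref{eq:chain_irrid} (semisimplicity), but stating that identification would fully reconcile your sequence with the one the paper constructs.
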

\begin{proof}
Line~\eqref{eq:split_les} is the long exact sequence of the chain complex~\eqref{eq:split_complex}. By Theorem~\ref{thm:Gsequence}, the connecting homomorphism $\vartheta^{(j)}$ is exactly the $(j)$-th irreducible component of the full connecting $G$-homomorphism $\vartheta: H_{i+1} \call/\calk \to H_i \calk$ from diagram~\eqref{eq:Gles}.
\end{proof}

\subsection{A Symmetrical Maxwell Counting Rule}

Characters have useful orthogonality and projection properties \cite{jl} that make character theory a critical tool for counting the dimensions of chain and homology spaces. We demonstrate the utility of character theory by formulating a symmetric version of Maxwell's rule for frameworks in terms of a symmetric Euler characteristic for chain complexes using characters.

The {\it Euler characteristic} of a finite dimensional chain complex $C$ is the alternating sum of the dimensions
\begin{equation*}
    \calx(C) = \sum_i (-1)^i \dim C_i
\end{equation*}
The Euler formula has found use in molecular chemistry \cite{ceulemans1991extension}, DNA polyhedra \cite{hu2011new}, configuration spaces in robotics \cite{farber2008invitation}, and of course in structural mechanics \cite{connelly_guest_2022, cooperband2023homological} among many other applications.

\begin{theorem}[Standard Euler Characteristic \cite{AlgebraicHatcher2002}]\label{thm:Euler}
    The Euler characteristic of a finite dimensional chain complex and its homology are equal. In particular, for a cosheaf $\calk$ with finite dimensional stalks over a finite dimensional cell complex we have 
    \begin{equation*}
        \calx(C\calf) = \sum_{i} (-1)^i \dim C_i \calk = \sum_{i} (-1)^i \dim H_i\calk = \calx(H\calf).
    \end{equation*}
\end{theorem}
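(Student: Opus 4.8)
The plan is to prove the equality of Euler characteristics by exploiting the standard exactness machinery of chain complexes, reducing the statement to a purely dimension-counting argument. The key observation is that for a finite-dimensional chain complex, the rank-nullity theorem applied to each boundary map $\bdd_i : C_i\calk \to C_{i-1}\calk$ gives $\dim C_i\calk = \dim \ker \bdd_i + \dim \im \bdd_i$. Since homology is defined as $H_i\calk = \ker \bdd_i / \im \bdd_{i+1}$, we also have $\dim H_i\calk = \dim \ker \bdd_i - \dim \im \bdd_{i+1}$. These two relations are the only ingredients needed.

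First I would substitute the rank-nullity expression into the alternating sum defining $\calx(C\calk)$, obtaining
\begin{equation*}
    \sum_i (-1)^i \dim C_i\calk = \sum_i (-1)^i \left( \dim \ker \bdd_i + \dim \im \bdd_i \right).
\end{equation*}
The strategy is then to show that the $\dim \im \bdd_i$ terms telescope and cancel against a reindexed copy of themselves. Specifically, I would rewrite $\sum_i (-1)^i \dim \im \bdd_i$ by shifting the index on the boundary-image term that appears in the homology formula: the term $\dim \im \bdd_{i+1}$ contributes to $H_i$, so after regrouping, the alternating sum of $\dim \ker \bdd_i$ minus the alternating sum of $\dim \im \bdd_{i+1}$ should reassemble precisely into $\sum_i (-1)^i \dim H_i\calk$. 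The bookkeeping amounts to verifying that each $\dim \im \bdd_i$ appears twice with opposite signs (once from the rank-nullity split of $C_i$ and once from the reindexed homology contribution) and therefore vanishes from the total.

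Concretely, I would organize this as $\sum_i (-1)^i \dim C_i\calk = \sum_i (-1)^i \dim \ker \bdd_i + \sum_i (-1)^i \dim \im \bdd_i$, and observe that $\sum_i (-1)^i \dim H_i\calk = \sum_i (-1)^i \dim \ker \bdd_i - \sum_i (-1)^i \dim \im \bdd_{i+1}$. Subtracting, the difference between the two Euler characteristics is $\sum_i (-1)^i \dim \im \bdd_i + \sum_i (-1)^i \dim \im \bdd_{i+1}$; the second sum, after the substitution $i \mapsto i-1$, becomes $-\sum_i (-1)^i \dim \im \bdd_i$, so the difference is zero. The finiteness of the complex (only finitely many nonzero stalks over a finite cell complex) guarantees these are finite sums so the reindexing introduces no boundary terms.

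The main obstacle here is not conceptual but notational: one must be careful that the index shift in the telescoping sum does not leave stray endpoint terms, which is exactly where the finite-dimensionality hypothesis is essential. Since the excerpt already assumes finite-dimensional stalks over a finite cell complex, all sums are genuinely finite and the reindexing is unproblematic; the identification $\calx(C\calk) = \calx(H\calk)$ then follows immediately. For the cosheaf special case in the statement, nothing beyond this general fact is needed, since the chain spaces $C_i\calk$ are finite-dimensional vector spaces and the boundary maps are linear.
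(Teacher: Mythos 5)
Your proof is correct and is precisely the standard argument: the paper offers no proof of this theorem at all, simply citing Hatcher, and the proof in that reference is the same one you give --- rank-nullity applied to each $\bdd_i$, the identity $\dim H_i\calk = \dim\ker\bdd_i - \dim\im\bdd_{i+1}$, and the telescoping cancellation of the alternating sums of $\dim\im\bdd_i$. Your attention to the finiteness hypothesis (so the reindexed sums have no stray endpoint terms) is exactly the right care to take, so there is nothing to add.
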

Theorem~\ref{thm:Euler} also applies to chain complexes corresponding to the irreducible representations of $G$, namely $\calx(C^{(j)}\calk) = \calx(H^{(j)} \calk)$.

The well-known {\it Maxwell counting rule} is the statement that the difference in dimensions of self stresses and kinematic degrees of freedom is equivalent to counting the different dimension cells of a finite framework \cite{Calladine1978}; in two dimensions this is the equation
\begin{equation}
\label{eq:maxwell_rule}
    \#\text{kinematic degrees of freedom} - \#\text{self stress dimensions} = 2\#\text{vertices} - \#\text{edges}
\end{equation}
where global translation and rotation assignments are included in the kinematic space. This is exactly the application of Theorem~\ref{thm:Euler} to the force cosheaf $\calf$ \cite{cooperband2023homological}.

Over a $G$-framework $(X, \alpha, p)$, the Maxwell counting rule~\eqref{eq:maxwell_rule} takes a more refined form. At the identity element $\epsilon$, the character $\chi_\rho(\epsilon) = \text{trace}(\rho(\epsilon))$ is nothing more than the degree of $\chi_\rho$. Therefore the Standard Euler Characteristic Theorem (Theorem~\ref{thm:Euler}) for a $G$-cosheaf $(\calk, \rho)$ can be reformulated as
\begin{equation}
\label{eq:euler_e}
    \calx(C\calk) = \sum_i (-1)^i\chi(\rho_i)(\epsilon) = \sum_i (-1)^i \chi(\rho_{H_i \calk})(\epsilon) = \calx(H \calk).
\end{equation}
However, there is no need to restrict the character $\chi(\rho)$ to only the identity element. Equation~\eqref{eq:euler_e} in vector form (in $\C^{|G|}$) is
\begin{equation}
\label{eq:char_euler}
    \sum_i (-1)^i \chi(\rho_i) = \sum_i (-1)^i \chi(\rho_{H_i \calk}),
\end{equation}
which is quickly seen from repeated application of Theorem~\ref{thm:Gkernel} and character identities~\eqref{eq:char_identities}. Equation~\eqref{eq:char_euler} applies to any $G$-cosheaf chain complex $(C\calk, \rho)$, including its irreducible component chain complexes $(C^{(j)}\calk, \rho)$. For any index $(j)$ the following identity holds:
\begin{equation}
\label{eq:irrid_euler}
    \sum_i (-1)^i \chi(\rho^{(j)}_i) = \sum_i (-1)^i \chi(\rho^{(j)}_{H_i \calk}).
\end{equation}

By Theorem~\ref{thm:char_decomp}, each representation $\rho_i$ is equivalent to a direct sum of an integer number $N_i^{(j)}(\rho_i) = \langle \chi(\rho_i), \chi(\mu^{(j)}) \rangle$ of factors of the irreducible representation $\mu^{(j)}$. In particular,
\begin{equation}
    \label{eq:char_counting}
    \chi(\rho_i) = \sum_{(j)} N_i^{(j)}(\rho) \cdot \chi(\mu^{(j)})
\end{equation}
is the decomposition by irreducible characters~\eqref{eq:char_decomp}.

\begin{definition}
    For a finite group $G$, the {\it symmetric Euler characteristic} of a finite dimensional $G$-chain complex $(C, \rho)$ is
    \begin{equation}
        \label{eq:Geuler_chain}
        \hat{\calx}(C, \rho) =
        \left(
        \sum_i (-1)^i N_i^{(1)}(\rho), \dots, \sum_i (-1)^i N_i^{(m)}(\rho)
        \right),
    \end{equation}
    an $m$-tuple of integers where $m$ is the number of irreducible representations of the group $G$.
\end{definition}

\begin{theorem}[Symmetric Euler characteristic]
    For a finite group $G$, the symmetric Euler chararacteristics of a finite $G$-chain complex and its homology are equal. In particular for a $G$-cosheaf $\calk$ with finite dimensional stalks over a finite dimensional cell complex, the $(j)$-th components of the symmetric Euler characteristics are equal to
    \begin{equation}
    \label{eq:Geuler_char}
        \hat{\calx}^{(j)}(C\calk, \rho) = \sum_i (-1)^i N_i^{(j)}(\rho) = \sum_i (-1)^i N_i^{(j)}(\rho_{H_i \calk}) = \hat{\calx}^{(j)}(H\calk, \rho_{H\calk})
    \end{equation}
    for every index ${(j)}$.
\end{theorem}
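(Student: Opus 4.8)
The plan is to derive the claimed $(j)$-component identity \eqref{eq:Geuler_char} by projecting the already-available character-valued Euler identity \eqref{eq:char_euler} onto each irreducible character. The key observation is that both sides of \eqref{eq:char_euler} are class functions in $\hat{G}$, while the multiplicity $N_i^{(j)}$ is by definition the character inner product against $\chi(\mu^{(j)})$; hence the entire statement reduces to pairing an equality we already possess with $\chi(\mu^{(j)})$ and commuting the inner product past a finite alternating sum.

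Concretely, I would start from \eqref{eq:char_euler}, namely $\sum_i (-1)^i \chi(\rho_i) = \sum_i (-1)^i \chi(\rho_{H_i \calk})$, and pair both sides with $\chi(\mu^{(j)})$ in the character inner product. Since the inner product is linear in its first argument and the index set is finite, this distributes over the alternating sum to give
\begin{equation*}
    \sum_i (-1)^i \big\langle \chi(\rho_i), \chi(\mu^{(j)}) \big\rangle = \sum_i (-1)^i \big\langle \chi(\rho_{H_i \calk}), \chi(\mu^{(j)}) \big\rangle.
\end{equation*}
By the definition $N_i^{(j)}(\rho) = \langle \chi(\rho_i), \chi(\mu^{(j)}) \rangle$ preceding \eqref{eq:char_counting}, together with the analogous definition for $\rho_{H_i \calk}$, the left side is exactly $\hat{\calx}^{(j)}(C\calk, \rho)$ and the right side is $\hat{\calx}^{(j)}(H\calk, \rho_{H\calk})$, which is \eqref{eq:Geuler_char}. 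Collecting these identities over $j = 1, \dots, m$ yields the full vector equality $\hat{\calx}(C\calk, \rho) = \hat{\calx}(H\calk, \rho_{H\calk})$.

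The genuine content sits one step earlier, in the identity \eqref{eq:char_euler} itself, which I would establish as follows. For each degree $i$, Theorem~\ref{thm:Gkernel} guarantees that the cycles, the boundaries $\im \bdd_{i+1}$, and the image of $\bdd_i$ are all $G$-submodules, and Maschke's theorem splits the chain module $G$-equivariantly as $C_i \calk \iso H_i \calk \oplus \im \bdd_{i+1} \oplus \im \bdd_i$ (homology, boundaries, and a complement isomorphic to the image). Taking characters and invoking additivity over direct sums \eqref{eq:char_identities}, the two image contributions telescope in the alternating sum --- each image module of $\bdd_{i+1}$ appears as a summand of $C_{i+1}\calk$ with sign $(-1)^{i+1}$ and as the boundary submodule of $C_i\calk$ with sign $(-1)^i$ --- so they cancel in pairs and only $\sum_i (-1)^i \chi(\rho_{H_i \calk})$ survives, giving \eqref{eq:char_euler}. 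The only delicate point is that these splittings be $G$-equivariant rather than mere isomorphisms of vector spaces, which is precisely what Maschke's theorem supplies; once \eqref{eq:char_euler} is in hand, the projection onto $\chi(\mu^{(j)})$ is immediate from the orthonormality of irreducible characters (Theorem~\ref{thm:char_orth}). Thus the main obstacle is conceptual --- ensuring every decomposition respects the $G$-action --- rather than computational.
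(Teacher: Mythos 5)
Your proof is correct, and its core ingredient --- the character-valued Euler identity \eqref{eq:char_euler} --- is the same one the paper relies on; what differs is how you extract the $(j)$-th component. The paper first passes to the isotypic subcomplexes $C^{(j)}\calk$ (via Theorem~\ref{thm:Gsequence} and the decomposition \eqref{eq:chain_irrid}), applies the character Euler identity to each to obtain \eqref{eq:irrid_euler}, and then matches coefficients using $\hat{\calx}^{(j)}(C\calk,\rho)\cdot\chi(\mu^{(j)}) = \sum_i(-1)^i\chi(\rho_i^{(j)})$ and the nonvanishing of $\chi(\mu^{(j)})$. You instead pair \eqref{eq:char_euler} directly against $\chi(\mu^{(j)})$ in the character inner product and use linearity plus orthonormality (Theorem~\ref{thm:char_orth}) together with the definition $N_i^{(j)}(\rho) = \langle \chi(\rho_i), \chi(\mu^{(j)})\rangle$. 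This bypasses the component complexes entirely, and in particular sidesteps the true-but-unstated fact, implicit in the paper's use of \eqref{eq:irrid_euler}, that the homology of the subcomplex $C^{(j)}\calk$ coincides with $H_i^{(j)}\calk$; your route is therefore slightly more self-contained as a proof of this one theorem, while the paper's route builds structural objects (the isotypic subcomplexes and their homologies) that it reuses elsewhere, e.g.\ in Lemma~\ref{lem:split_les} and Theorem~\ref{thm:G_GS}. Finally, your telescoping argument for \eqref{eq:char_euler} itself --- the $G$-equivariant splitting $C_i\calk \iso H_i\calk \oplus \im\bdd_{i+1}\oplus \im\bdd_i$ furnished by Theorem~\ref{thm:Gkernel} and Maschke's theorem, followed by pairwise cancellation of the image terms in the alternating sum --- is precisely the argument the paper compresses into the remark that \eqref{eq:char_euler} is ``quickly seen from repeated application of Theorem~\ref{thm:Gkernel} and character identities~\eqref{eq:char_identities}'', so on that point you have supplied detail the paper omits rather than diverged from it.
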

\begin{proof}
    Clearly the equalities
    \begin{equation*}
        \hat{\calx}^{(j)}(C\calk, \rho) \cdot \chi(\mu^{(j)})  = \sum_i (-1)^i N_i^{(j)}(\rho) \cdot \chi(\mu^{(j)}) = \sum_i (-1)^i \chi(\rho^{(j)}_i)
    \end{equation*}
    hold by the defining Equation~\eqref{eq:Geuler_chain}. A similar equality holds for homologies $\hat{\calx}^{(j)}(H\calk, \rho_{H\calk})$. The result then follows from Equation~\eqref{eq:irrid_euler}.
\end{proof}

Note that when $G$ is the trivial group, the extended Euler characteristic $\hat{\calx}(C\calk, \id)$ is nothing more than the standard Euler characteristic $\calx(C\calk)$. In fact, for any finite group the standard Euler characteristic can be recovered by taking the degree of both sides of Equation~\eqref{eq:char_counting}; this is the weighted sum
\begin{equation*}
    \calx(C \calk) = \sum_i (-1)^i \deg \chi(\rho_i) = \sum_i \sum_{(j)} (-1)^i N_i^{(j)}(\rho)\cdot \deg \chi(\mu^{(j)}) = \sum_{(j)} \hat{\calx}^{(j)} (C\calk, \rho) \cdot \dim \mu^{(j)}.
\end{equation*}

Over the $G$-force cosheaf $(\calf, \rho)$, Equation~\eqref{eq:Geuler_char} is the {\it symmetric Maxwell rule}, the analogue of Equation~\eqref{eq:maxwell_rule} in the group equivariant setting. While the standard Euler characteristic $\calx(C\calf)$ is an alternating sum of numbers of cells by dimension, the components of $\hat{\calx}(C\calf)$ are alternating sums of {\it symmetric force chains} detailed in numerous previous works \cite{fowler2000symmetry, CONNELLY2009762, connelly_guest_2022}. The symmetric Maxwell rule is useful for quickly detecting self-stresses of different symmetry types, which are undetectable using standard non-symmetric counts.

\begin{example}
    To demonstrate the symmetric Maxwell rule we reexamine the mirror-symmetric Figure~\ref{fig:mirror_gs}(a). Aligned with the trivial representation $\mu^{(1)}$, the chain complex $C^{(1)}\calf$ consists of chains that are fixed by every group action on the underlying space. The space $C^{(1)}_0\calf$ is spanned by fully mirror-symmetric vector assignments to vertices; these are spanned by vertical forces assignments to nodes B and C, as well as mirror-symmetric force pairs to A-C and D-F causing $N_0^{(1)}(\rho) = 2 + 2\cdot 2 = 6$. The space $C_1^{(1)}\calf$ is spanned by symmetric edge-assignments, the dimension of which (for the force cosheaf $\calf$) is equivalent to the number of edge orbits. Because three edges lie along the axis of symmetry and six don't, $N_1^{(1)}(\rho) = 3 + 6/2 = 6$.

    The symmetric Maxwell rule states that for the trivial index $(1)$, the equation $$\hat{\calx}^{(1)}(C\calf, \rho) = \hat{\calx}^{(1)}(H\calf, \rho_{H\calf})$$ evaluates to the following equation
    \begin{equation*}
         N_0^{(1)}(\rho) - N_1^{(1)}(\rho) = 6 - 6 = 0 = N_0^{(1)}(\rho_{H\calf}) - N_1^{(1)}(\rho_{H\calf}) = \dim H_0^{(1)}\calf - \dim H_1^{(1)}\calf
    \end{equation*}
    using  $\dim \mu^{(1)} = 1$. This means that there is an equal number of mirror symmetric self-stresses as kinematic degrees of freedom. Indeed, the single degree of self-stress is mirror-symmetric, pictured in Figure~\ref{fig:mirror_gs}(a), and the mirror-symmetric degree of freedom is vertical translation of the framework.
\end{example}

\subsection{Symmetric Graphic Statics}

We recall that the exact sequence of $G$-chain complexes~\eqref{eq:GS_chainses} is isomorphic to a direct sum of irreducible chain complex components of the form~\eqref{eq:chain_irrid}. For a given irreducible representation of $G$ with index $(j)$, the following diagram commutes with exact rows
\begin{equation*}
    \begin{tikzcd}
        & 0 \ar[r] \ar[d] & C_2^{(j)}\overline{\R^2} \ar[r, "\pi^{(j)}"] \ar[d, "\bdd^{(j)}"] & C_2^{(j)} \calp \ar[r] \ar[d, "\bdd^{(j)}"] & 0 \\
        0 \ar[r] & C_1^{(j)} \calf \ar[r, "\phi^{(j)}"] \ar[d, "\bdd^{(j)}"] & C_1^{(j)} \overline{\R^2} \ar[r, "\pi^{(j)}"] \ar[d, "\bdd^{(j)}"] & C_1^{(j)} \calp \ar[r] \ar[d] & 0 \\
        0 \ar[r] & C_0^{(j)}\calf \ar[r, "\phi^{(j)}"] & C_0^{(j)} \overline{\R^2} \ar[r] & 0 & \\
    \end{tikzcd}
\end{equation*}
summarizing the relationship between the symmetric chains (force loadings and positions) relevant in graphic statics.

Because $H_1 X=0$ and thus $H_1 \overline{\R^2} = 0$, it follows that $H_1^{(j)} \overline{\R^2} = 0$ for each $(j)$. The homology spaces $H_2 \overline{\R^2}$ and $H_0 \overline{\R^2}$ consist of constant vector assignments to every face and vertex in $X$. Consequently, the $G$-modules $(H_i \overline{\R^2},\eta_{H_i \overline{\R^2}})$ and $(\R^2, \tau)$ are equivalent for $i=0,2$ and the isomorphism $\R^2 \to H_i \overline {\R^2}$ is the diagonal map.

Utilizing Lemma~\ref{lem:split_les}, the long exact sequence corresponding to an irreducible representation $\mu^{(j)}$ splits into two exact sequences
\begin{equation}\label{eq:split_seq1}
    0 \to H_2^{(j)} \overline{\R^2} \xrightarrow{\pi^{(j)}} H_2^{(j)} \calp \xrightarrow{\vartheta^{(j)}} H_1^{(j)} \calf \to 0
\end{equation}
\begin{equation*}
    0 \to H_1^{(j)} \calp \xrightarrow{\vartheta^{(j)}} H_0^{(j)} \calf \xrightarrow{\phi^{(j)}} H_0^{(j)} \overline{\R^2} \to 0,
\end{equation*}
where the former exact sequence describes the $\mu^{(j)}$-symmetric components of the graphic statics relation, described previously in Example~\ref{ex:graphic_statics}.

\begin{theorem}[Symmetric planar 2D graphic statics] \label{thm:G_GS}
    Let $(X, \alpha, p)$ be a planar $G$-framework in $\R^2$. For each irreducible representation $\mu^{(j)}$ of $G$, there is an isomorphism between $\mu^{(j)}$-symmetric self-stresses and $\mu^{(j)}$-symmetric realizations of the dual $G$-cell complex up to $\mu^{(j)}$-translational symmetry.
\end{theorem}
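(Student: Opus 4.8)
The plan is to obtain the theorem as the $\mu^{(j)}$-graded refinement of the non-equivariant isomorphism $\vartheta: H_2\calp/\R^2 \to H_1\calf$ derived in Example~\ref{ex:graphic_statics}, by running the equivariant short exact sequence~\eqref{eq:GS_chainses} through the irreducible splitting of Lemma~\ref{lem:split_les}. A finite symmetry group of a planar framework realized via $\tau$ sits inside $O(2)$ and is therefore cyclic or dihedral, so that~\eqref{eq:GS_chainses} genuinely applies. First I would record that~\eqref{eq:GS_chainses} is an honest short exact sequence of $G$-cosheaves: this is exactly the content of Example~\ref{ex:ZmDm_cosheaf_map} and Lemma~\ref{lem:ZmDm_position}, which show that $\phi$ and $\pi$ are $G$-cosheaf maps while exactness at each stalk is inherited from Example~\ref{ex:graphic_statics}. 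Applying Lemma~\ref{lem:split_les} then yields, for each fixed irreducible $\mu^{(j)}$, the long exact sequence~\eqref{eq:split_les} in $(j)$-isotypic homology.

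Next I would truncate this long exact sequence to the short sequence~\eqref{eq:split_seq1}, where two vanishing facts do all the work. Since the force cosheaf carries no data over faces, $C_2\calf = 0$, so $H_2^{(j)}\calf = 0$ and the induced map $\pi^{(j)}: H_2^{(j)}\overline{\R^2} \to H_2^{(j)}\calp$ is injective. At the other end, the spherical topology of $X$ gives $H_1\overline{\R^2}=0$, hence $H_1^{(j)}\overline{\R^2}=0$ for every $(j)$, making the connecting homomorphism $\vartheta^{(j)}$ surjective. Exactness of~\eqref{eq:split_seq1} then produces the isomorphism
\[
    \vartheta^{(j)}: \; H_2^{(j)}\calp \big/ H_2^{(j)}\overline{\R^2} \;\xrightarrow{\ \iso\ }\; H_1^{(j)}\calf .
\]

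It then remains to read off the three terms. The target $H_1^{(j)}\calf$ is by definition the $\mu^{(j)}$-isotypic component of the self-stress space $H_1\calf$, i.e.\ the $\mu^{(j)}$-symmetric self-stresses. The space $H_2^{(j)}\calp$ is the $\mu^{(j)}$-component of the parallel reciprocal diagrams $H_2\calp$ identified in Example~\ref{ex:graphic_statics}, i.e.\ the $\mu^{(j)}$-symmetric realizations of the dual $G$-cell complex. Finally $H_2^{(j)}\overline{\R^2}$ is the $\mu^{(j)}$-component of the global translations; using the stated equivalence $(H_2\overline{\R^2}, \eta_{H_2\overline{\R^2}}) \iso (\R^2, \tau)$ via the diagonal map, this is precisely the $\mu^{(j)}$-isotypic piece of $(\R^2,\tau)$, which vanishes unless $\mu^{(j)}$ occurs in the decomposition of $\tau$ recorded in Example~\ref{ex:irrep_tau}. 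Quotienting by it is the meaning of ``up to $\mu^{(j)}$-translation symmetry,'' which completes the identification.

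I expect the only genuine subtlety — the point to argue rather than merely cite — to be the identification of the translation submodule. One must confirm that the image of $H_2^{(j)}\overline{\R^2}$ inside $H_2^{(j)}\calp$ really is the $\mu^{(j)}$-symmetric translations despite the sign flip in the position representation $\eta/\rho$ flagged in Remark~\ref{rem:sign_flip}. Since $\pi$ is a $G$-cosheaf map and $\pi^{(j)}$ is injective, $\pi^{(j)}$ is a $G$-isomorphism onto its image, so the translation submodule inherits the $G$-module structure of $(\R^2,\tau)$; the sign flip is absorbed by this isomorphism and does not alter which irreducible the translations occupy. Everything else is formal diagram chasing already packaged into the cited lemmas, so beyond this bookkeeping the theorem follows as a corollary of Lemma~\ref{lem:split_les}.
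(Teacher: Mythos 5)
Your proposal is correct and follows essentially the same route as the paper: the paper also applies Lemma~\ref{lem:split_les} to the equivariant short exact sequence~\eqref{eq:GS_chainses}, truncates the resulting long exact sequence using $C_2\calf=0$ and $H_1^{(j)}\overline{\R^2}=0$ to obtain~\eqref{eq:split_seq1}, and then identifies $H_2^{(j)}\overline{\R^2}$ with $(\R^2)^{(j)}$ via the diagonal map to conclude $\vartheta^{(j)}\colon H_2^{(j)}\calp/\pi^{(j)}(\R^2)^{(j)}\iso H_1^{(j)}\calf$. The details you spell out (the truncation argument and the identification of the translation submodule, including the sign-flip issue of Remark~\ref{rem:sign_flip}) are exactly the content the paper places in the discussion immediately preceding the theorem, so your write-up matches the paper's proof.
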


\begin{proof}
    The first exact sequence~\eqref{eq:split_seq1} reduces to
    \begin{equation}\label{eq:eq_graphic_statics}
        0 \to (\R^2)^{(j)} \xrightarrow{\pi} H_2^{(j)} \calp \xrightarrow{ \vartheta^{(j)}} H_1^{(j)} \calf \to 0
    \end{equation}
    meaning $\vartheta^{(j)}:H_2^{(j)} \calp / \pi^{(j)}(\R^2)^{(j)} \iso H_1^{(j)} \calf$ is an isomorphism. The image $\pi^{(j)}(\R^2)^{(j)}$ consists of the $\tau^{(j)}$-constant vector assignments to all faces of $X$ (constant translational assignments to dual vertices).
\end{proof}
Because $\tau$ is of dimension 2, $\tau^{(j)}$ is non-trivial for at most two indices $(j)$. When $\tau^{(j)}= 0$, the connecting homomorphism $\vartheta^{(j)}$ is then an isomorphism. See Example~\ref{ex:irrep_tau} for some examples of $\tau^{(j)}$.

\begin{figure}[htp]
    \centering
    \includegraphics[scale = 0.9]{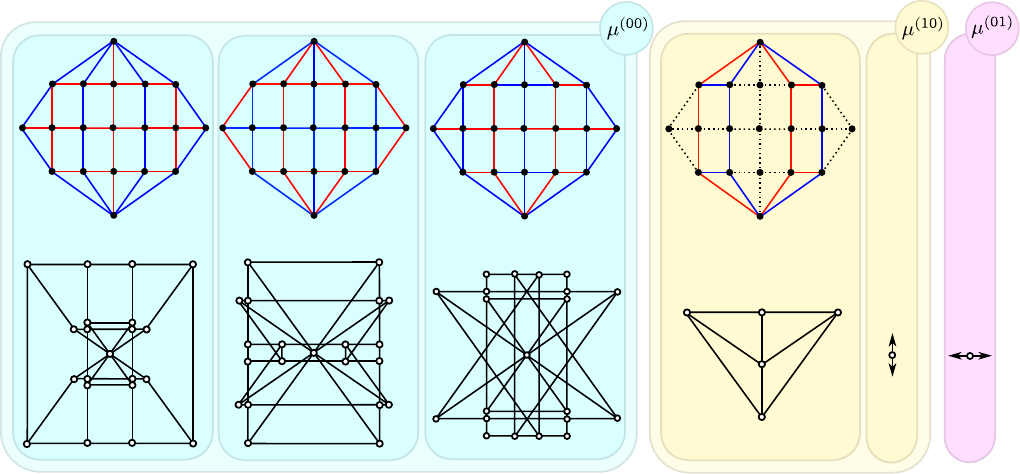}
    \caption{We revisit the self-stresses and reciprocal figures of Figure~\ref{fig:klien_recip} through the symmetric graphic statics Theorem~\ref{thm:G_GS}. Each irreducible representation of index $(j)$ is colored and generator pairs of both $H_1^{(j)}\calf$ and $H_2^{(j)} \calp$ are matched. There are four degrees of self-stress and six degrees of reciprocal figures. In each pair, the self-stress over the primal framework is pictured above with red edges in compression, blue edges in tension, and dashed edges with zero force. The corresponding reciprocal figure is pictured below. The trivial reciprocal translations on the right are global vector assignments in the image $G$-module $\pi(\R^2, \tau)$ of $H_2 \calp$ following the exact sequence~\eqref{eq:eq_graphic_statics}. Because the standard representation $\tau$ decomposes as $\mu^{(01)}\oplus\mu^{(10)}$ following Example~\ref{ex:irrep_tau}, these generate the global translations $\pi(\R^2)^{(01)}$ and $\pi(\R^2)^{(10)}$. The fourth irreducible representation $\mu^{(1,1)}$ is assigned only trivial cycles.}
    \label{fig:klein_gs}
\end{figure}

\begin{example}
    A truss with one degree of self-stress and $D_2$ mirror symmetry is pictured in Figure~\ref{fig:mirror_gs}(a). Its three degrees of reciprocal diagrams are either $\mu^{(1)}$- or $\mu^{(2)}$-symmetric, which we now describe.
    
    The group $D_2$ has two irreducible one-dimensional representations, the trivial representation $\mu^{(1)}$ and the representation $\mu^{(2)}$ that takes value $-1$ on the reflection $s\in D_2$. The self-stress pictured in Figure~\ref{fig:mirror_gs}(a) corresponds to the former, with $H^{(1)}_1 \calf = H_1 \calf$ dimension 1. The representation $\tau^{(1)}$ is one-dimensional, and the graphic statics sequence~\eqref{eq:eq_graphic_statics} with superscript (1) has isomorphic entries to
    \begin{equation*}
        0 \to \R \to H_2^{(1)} \calp \xrightarrow{\vartheta^{(1)}} H_1^{(1)} \calf \to 0
    \end{equation*}
    Here $H_2^{(1)} \overline{\R^2} \iso \R$ consists of the left-right translations of the reciprocal (b). One dimension of $H_2^{(1)}\calp$ corresponds to this horizontal translation, while the other scales the diagram pictured in (b) corresponding to the self-stress in (a). The exact sequence~\eqref{eq:eq_graphic_statics} for superscript (2) is equivalent to
    \begin{equation*}
        0 \to \R \to H_2^{(2)} \calp \to 0
    \end{equation*}
    where $H_2^{(2)} \overline{\R^2} \iso \R$ consists of the up-down translations of the reciprocal, which are inverted by the action $\eta_f(s) = -\tau(s)$ over the reflection $s$ at every face/dual vertex.
\end{example}

\begin{remark}
    Figures~\ref{fig:klein_gs},~\ref{fig:star_recip}, and~\ref{fig:flower_recip} picture symmetric frameworks with their reciprocal diagrams grouped by irreducible representation. Splitting the homology space $H_2 \calp$ into its irreducible $G$-submodules was done by the following process. For each irreducible representation $\mu^{(j)}$, we form the linear map
    \begin{equation}
        {\bf \gamma}^{(j)} := \sum_{g\in G} \chi(\mu^{(j)})(g) \cdot \eta/\rho(g): C_2 \calp \to C_2 \calp.
    \end{equation}
    Then $\gamma^{(j)}$ has image $C_2^{(j)} \calp$, and when restricted to homology cycles the map $\gamma^{(j)}|_{H_2 \calp}$ has the desired image $H_2^{(j)} \calp$.
\end{remark}

\begin{figure}
    \centering
    \includegraphics{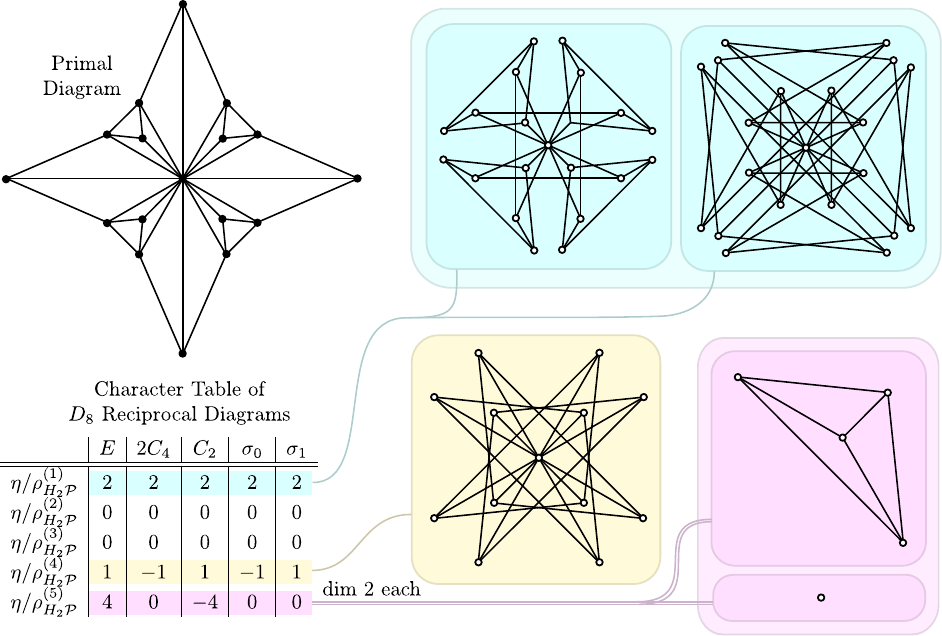}
    \caption{A framework with $D_8$ dihedral symmetry and a family of  reciprocal frameworks corresponding to irreducible representations of $D_8$. The last irreducible representation $\mu^{(5)}$ has dimension two. Then, with the space $H_2^{(5)} \calp$ being four dimensional the representation $\eta/\rho^{(5)}_{H_2 \calp}$ consists of two factors of $\mu^{(5)}$. The result is that the four dimensions of reciprocal diagrams are grouped into pairs of two each.}
    \label{fig:star_recip}
\end{figure}

\begin{figure}
    \centering
    \includegraphics{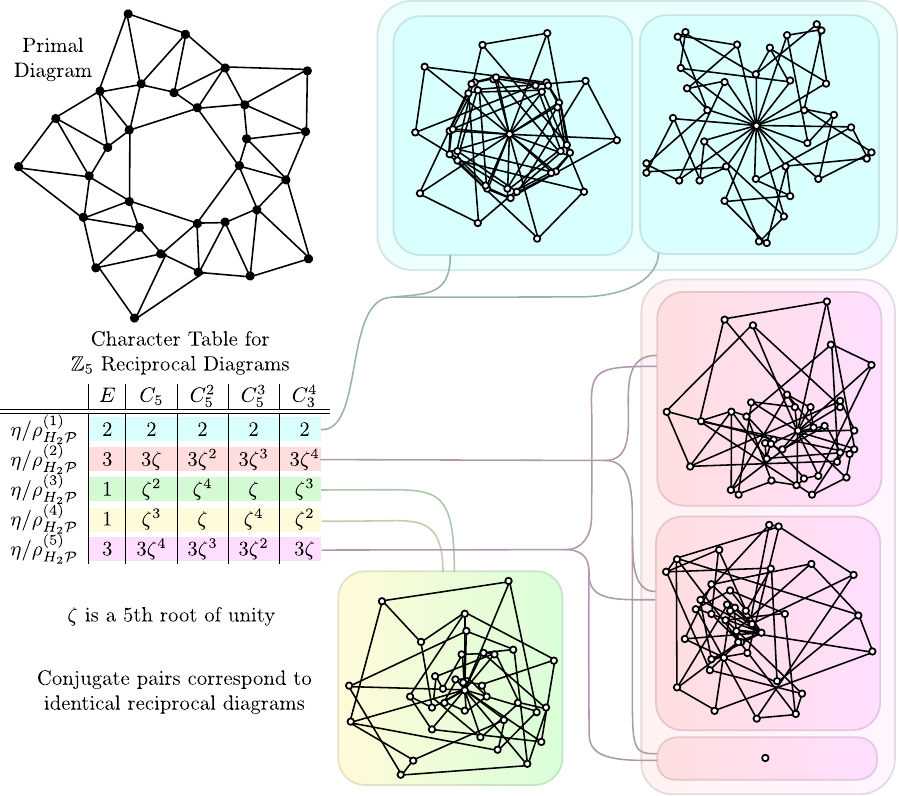}
    \caption{A framework with $5$-fold rotational symmetry and a family of reciprocal frameworks corresponding to irreducible representations of $\Z_5$. Note that conjugate pairs of representations correspond to identical reciprocal diagrams. The coordinates of dual vertices are a pair of complex numbers with zero imaginary component, thus taking the complex conjugate of a coordinate has no effect.}
    \label{fig:flower_recip}
\end{figure}

\section{Conclusion and Future Work}\label{sec:future}

We have developed graphic statics for symmetric frameworks, decomposing the classical self-stress to reciprocal diagram correspondence by the irreducible representation of the finite group. These reciprocal diagrams are useful as both a geometric visualization of the internal stresses and as a design tool for the framework (and its polyhedral liftings). Through the symmetric graphic statics result -- Theorem~\ref{thm:G_GS} -- frameworks can now be designed graphically along the symmetry of their internal stresses.

It is straightforward to generalize the methods here to frameworks with exterior loadings as well as polyhedral lifts of self-stressed frameworks with cyclic and dihedral symmetry. This leads to the natural question of formulating equivariant cosheaves over {\it periodic} stressed frameworks, considered as the lift of a toroidal lifting \cite{ErLin} to the simply connected cover $\R^2$. The group characters and representations are then intimately related to the discrete Fourier transform by the Peter-Weyl theorem, indicating a deep connection with methods in harmonic analysis \cite{mackey1980harmonic}. In higher dimensions, there are potential extensions of this work to vector graphic statics \cite{d2019vector}, 3D-graphic statics \cite{ReciprocalAkbarzadeh2016, AlgebraicHablicsek2019}, and beyond. Higher order symmetry has already been utilized in designing tensegrity structures to great effect \cite{connelly_guest_2022}.

The kinematics and dynamics of symmetric structures can likewise be investigated homologically and equivariantly. The instantaneous velocities of a framework (treated as a pinned linkage) are homology classes \cite{cooperband2023reciprocal} and could be treated with the cosheaf method. This may be particularly useful for modeling the folding of symmetric origami, which have found remarkable applications in deployable aerospace structures, biomedical devices, and metamaterials \cite{lu2023origami}. We note that after applying such a velocity for some finite time, the underlying framework symmetry may change.

It is open to study the actions of subgroups $H \leq G$ and the relations between representations and characters under subgroup restriction or Brauer induction, such as the approach in \cite{boltje1994identities}. We suspect one can define a sheaf theoretic analogue of Bredon homology \cite{MayEquivariant}; however this would require extra structure outside of the scope of this paper. Moreover, it is unclear how to interpret equivariant homology groups of a cosheaf, but we  believe that it is worthy of further investigation.

There has been much previous work on the {\it quotient framework} of a symmetric framework (with possible self-loops) under the group action in question \cite{SWbook, SCHULZE2023112492}. One consequence of this work is that the self-stresses of quotient frameworks are identified as an {\it equivariant homology} space. It is well known that when $G$ is a free action, a $G$-sheaf on $X$ is equivalent to an ordinary sheaf on $X / G$ \cite{EquivShvAndFunctors}. We suspect that methods from equivariant homology theory \cite{jones1987cyclic} can be utilized towards the understanding of quotient frameworks and structures.

\section*{Acknowledgements}

Bernd Schulze was partially supported by the ICMS Knowledge Exchange Catalyst Programme. Zoe Cooperband and Miguel Lopez were supported by the Air Force Office of Scientific Research under award number FA9550-21-1-0334. The authors are grateful for their illuminating discussions with Robert Ghrist, Jakob Hansen, Maxine Calle, and Mona Merling.

\printbibliography

\end{document}